 \newcommand{\filename}{{dynamical-coord-size-29-Jan-2025.tex}} 
\renewcommand{\geq}{\geqslant}
\renewcommand{\leq}{\leqslant}
\newcommand{\Osh}{{\mathcal O}}                        
\newcommand{\K}{\mathrm{K}}      
\newcommand{\Ish}{\mathcal{I}}
\newcommand{\Vol}{\operatorname{Vol}}
\newcommand{\ord}{\mathrm{ord}}
\newcommand{\KK}{\mathbf{K}}
\newcommand{\FF}{\mathbf{F}}
\newcommand{\PP}{\mathbb{P}} 
\newcommand{\QQ}{\mathbb{Q}} 
\newcommand{\RR}{\mathbb{R}} 
\newtheorem{theorem}{Theorem}[section]
\newtheorem{corollary}[theorem]{Corollary}
\newtheorem{proposition}[theorem]{Proposition}
\theoremstyle{definition}
\newtheorem{defn}[theorem]{Definition}
\newtheorem{remark}[theorem]{Remark}
\newtheorem{example}[theorem]{Example}
\newtheorem{conjecture}[theorem]{Conjecture}
\newtheorem{problem}[theorem]{Problem}
\numberwithin{equation}{section}
\begin{document}

\author{Nathan Grieve}
\address{Department of Mathematics \& Statistics,
Acadia University, Huggins Science Hall, Room 130,
12 University Avenue,
Wolfville, NS, B4P 2R6
Canada; 
School of Mathematics and Statistics, 4302 Herzberg Laboratories, Carleton University, 1125 Colonel By Drive, Ottawa, ON, K1S 5B6, Canada; 
D\'{e}partement de math\'{e}matiques, Universit\'{e} du Qu\'{e}bec \`a Montr\'{e}al, Local PK-5151, 201 Avenue du Pr\'{e}sident-Kennedy, Montr\'{e}al, QC, H2X 3Y7, Canada;
Department of Pure Mathematics, University of Waterloo, 200 University Avenue West, Waterloo, ON, N2L 3G1, Canada
}

\email{nathan.m.grieve@gmail.com}%

\author{Chatchai Noytaptim}

\address{
Department of Pure Mathematics, University of Waterloo, 200 University Avenue West, Waterloo, ON, N2L 3G1, Canada
}

\email{cnoytaptim@uwaterloo.ca}

\thanks{
\emph{Mathematics Subject Classification (2020):} 11J87, 14G05, 11J97, 11J25, 11J68, 14F06. \\
\emph{Key Words:} Arithmetic dynamics; Vojta's conjecture; forward orbits; Schimdt's Subspace Theorem; Log canonical thresholds \\
The first author thanks the Natural Sciences and Engineering Research Council of Canada for their support through his grants DGECR-2021-00218 and RGPIN-2021-03821. \\
\\
Date: \today.  \\
File name: \filename
}

\title[Vojta's height inequalities and asymptotic coordinate size dynamics]{On relative fields of definition for log pairs, Vojta's height inequalities and asymptotic coordinate size dynamics}

\begin{abstract}
We build on the perspective of the works \cite{Grieve:Noytaptim:fwd:orbits}, \cite{Matsuzawa:2023}, \cite{Grieve:qualitative:subspace}, \cite{Grieve:chow:approx}, \cite{Grieve:Divisorial:Instab:Vojta} (and others) and study the dynamical arithmetic complexity of rational points in projective varieties.  Our main results make progress towards the attractive problem of asymptotic complexity of coordinate size dynamics in the sense formulated by Matsuzawa, in  \cite[Question 1.1.2]{Matsuzawa:2023}, and  building on earlier work  of Silverman \cite{Silverman:1993}.  A key tool to our approach here is a novel formulation of conjectural Vojta type inequalities for log canonical pairs and with respect to finite extensions of number fields.  Among other features, these conjectured Diophantine arithmetic height inequalities raise the question of existence of log resolutions with respect to finite extensions of number fields which is another novel concept which we formulate in precise terms here and also which is of an independent interest.
\end{abstract}

\maketitle

\section{Introduction}\label{intro}

Our purpose here, is to study the dynamical arithmetic complexity of rational points in projective varieties.  In doing so, we build on the perspective of the recent works \cite{Grieve:Noytaptim:fwd:orbits}, \cite{Matsuzawa:2023}, \cite{Grieve:qualitative:subspace}, \cite{Grieve:chow:approx}, \cite{Grieve:Divisorial:Instab:Vojta} and others.  Our main results, see Theorems \ref{coord:sizes:main:thm}, \ref{coord:sizes:main:thm:cor} and \ref{coord:sizes:ZD:main:thm:cor}, make progress towards the attractive Problem \ref{main:problem} below.  Problem \ref{main:problem} encompasses \cite[Question 1.1.2]{Matsuzawa:2023} as a special case (compare also with \cite[Theorem 1.16]{Matsuzawa:2023} and \cite[Theorem E]{Silverman:1993}).  

Throughout this article, we work over a base number field $\KK$  and all finite extension fields $\FF / \KK$ are contained in $\overline{\KK}$ a fixed choice of algebraic closure of $\KK$.  We refer to Section \ref{abs:weil:function:conventions} for details about our conventions in regards to absolute values, local Weil and height functions.  They follow the approach from \cite{Grieve:qualitative:subspace}. Unless stated otherwise, all varieties are assumed to be defined over $\KK$ and are assumed to be geometrically integral.

\begin{problem}[Compare with {\cite[Question 1.1.2]{Matsuzawa:2023}}]\label{main:problem} 
Let $X$ be a projective variety over $\KK$.
Let $D'$ be a nonzero and  effective $\QQ$-Cartier divisor over $X$ and defined over some finite extension field $\FF / \KK$.  Fix a model $\mu' \colon X' \rightarrow X$ 
with $D'$ supported on $X'$ as a $\QQ$-Cartier divisor defined over $\FF$. Let $L$ be a big line bundle on $X$ and defined over $\KK$.  Let $f \colon X \rightarrow X$ 
be a surjective morphism and denote its $n$th iterate by
$ f^{(n)} \colon X \rightarrow X \text{.} $
Let $S \subset M_{\KK}$ be a finite set of places.  Let $x \in X(\KK)$.   We assume that 
$f^{(n)}(x) \not \in \operatorname{Center}_X(D')$ 
for $n \gg 0$ and for each such $n \gg 0$ we set $x_n' := (\mu ')^{-1}(f^{(n)}(x)) \text{.}$ 
Then, in this context, under what conditions on $f$, $D'$ and $x$ is it true that 
$$
\lim_{n \to \infty} \frac{ \sum_{v \in S} \lambda_{D'}(x_n', v)}{h_L(f^{(n)}(x))} = 0 \text{?}
$$
\end{problem}

Example \ref{eg:main:problem} below explains how Problem \ref{main:problem} includes, as a special case, \cite[Question 1.1.2]{Matsuzawa:2023}.

\begin{example}\label{eg:main:problem}
Consider the case of a subscheme $Y \subseteq X$, for $X$ a projective variety over $\KK$, and with $Y$ defined over a finite extension field $\FF / \KK$.  Let 
$$\pi \colon X' = \operatorname{Bl}_Y(X_{\FF}) \rightarrow X_{\FF}$$ 
be the blowing-up of $X$ along $Y$.  Let $E$ be the exceptional divisor.  Then the local Weil functions $\lambda_Y(\cdot,v)$ for $v \in M_{\KK}$ can be described as
$$
\lambda_Y( \cdot, v) := \lambda_E( \pi ^{-1}(\cdot), v) \text{.}
$$

Let $f \colon X \rightarrow X$ 
be a surjective morphism with $n$th iterate $ f^{(n)} \colon X \rightarrow X \text{.}$  Then, fixing $x \in X_{\FF}(\KK) \setminus Y \text{,}$ 
setting $x_n' := \pi ^{-1}(f^{(n)}(x))$
and assuming that $f^{(n)}(x) \not \in Y$
for all $n \gg 0$, in this context, Problem \ref{main:problem} asks the extent to which it holds true that 
$$
\lim_{n \to \infty} \frac{ \sum_{v \in S} \lambda_Y(f^{(n)}(x), v)  }{ h_L(f^{(n)}(x)) } = \lim_{n \to \infty}  \frac{ \sum_{v \in S} \lambda_E(x_n', v)  }{ h_L(f^{(n)}(x)) } = 0 \text{.}
$$
Problem \ref{main:problem} is thus a more general formulation of \cite[Question 1.1.2]{Matsuzawa:2023} (compare also with \cite[Theorem E]{Silverman:1993}).
\end{example}

Our approach towards Problem \ref{main:problem} builds on the strategy employed in \cite{Matsuzawa:2023}.  However, here we work in a much more general context and deduce our main results (Theorems \ref{coord:sizes:main:thm}, \ref{coord:sizes:main:thm:cor} and \ref{coord:sizes:ZD:main:thm:cor}) as a consequence of the following general and novel formulation of a Vojta type conjecture for log canonical pairs.

\begin{conjecture}\label{Vojta:Conj:lc:pairs:intro}
Let $X$ be a projective variety over $\KK$ and having canonical singularities. 
Let $D'$ be a nonzero effective $\QQ$-Cartier divisor over $X$, defined over a finite extension field $\FF / \KK$  and supported as a Cartier divisor defined over $\FF$ on some proper model $\mu' \colon X' \rightarrow X$ 
of $X$, defined over $\KK$.    Assume that $(X',D')$ is log canonical.  Fix a finite set of places $S \subset M_{\KK}$.  Let $\epsilon' > 0$ and $L' = (\mu')^* L$ 
for $L$ a big line bundle on $X$.  Then there exists a proper Zariski closed subset $Z \subsetneq X$ which is defined over $\KK$ and which is such that 
$$
\sum_{v \in S} \lambda_{D'}(\cdot,v) + h_{(\mu')^*\K_X} (\cdot) \leq \epsilon' h_{L'}(\cdot) + \mathrm{O}(1) \text{ over $X \setminus Z$.}
$$
\end{conjecture}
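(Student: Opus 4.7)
The plan is to derive Conjecture \ref{Vojta:Conj:lc:pairs:intro} from the standard form of Vojta's height inequality for smooth projective varieties equipped with reduced simple normal crossings boundary, applied after a suitable log resolution of $(X',D')$ over $\FF$, and then to descend the resulting exceptional subset from $\FF$ to $\KK$ via Galois symmetry. The strategy proceeds in three steps.

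First, pass to a log resolution $\pi \colon Y \to X'$ of the pair $(X',D')$ defined over $\FF$, which can be arranged so that the composition $\widetilde{\mu} := \mu' \circ \pi \colon Y \to X$ is simultaneously a resolution of singularities of $X$. Let $\Sigma$ denote the reduced $\FF$-rational divisor on $Y$ whose support is $\pi^{-1}(\supp D') \cup \mathrm{Exc}(\pi)$, which has SNC support by construction. Log canonicity of $(X',D')$ yields the discrepancy formula
\[
K_Y + \pi_{*}^{-1} D' + \sum_{j} F_j \;=\; \pi^{*}\bigl(\K_{X'} + D'\bigr) + \sum_{j} b_j F_j
\]
with all $b_j \geq 0$, where the $F_j$ range over the $\pi$-exceptional prime divisors. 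Because $X$ has canonical singularities and $\widetilde{\mu}$ is a resolution, one also has $K_Y = \widetilde{\mu}^{*} \K_X + E_0$ with $E_0$ effective and $\widetilde{\mu}$-exceptional.

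Second, apply the classical smooth SNC case of Vojta's height inequality to $(Y, \Sigma)$, with big line bundle $\widetilde{\mu}^{*} L = \pi^{*} L'$ in the error term. Letting $S_{\FF} \subset M_{\FF}$ be the finite set of $\FF$-places lying above $S$, this yields a proper Zariski closed $Z_Y \subsetneq Y$, defined over $\FF$, together with
\[
\sum_{w \in S_{\FF}} \lambda_{\Sigma}(\cdot,w) + h_{K_Y}(\cdot) \;\leq\; \epsilon'' \, h_{\widetilde{\mu}^{*} L}(\cdot) + \mathrm{O}(1) \quad \text{off } Z_Y,
\]
where $\epsilon'' = \epsilon'/C$ absorbs the bounded comparison $\lambda_{\pi^{*} D'} \leq C \lambda_{\Sigma} + \mathrm{O}(1)$ coming from the multiplicities of $\pi^* D'$ supported within $\Sigma$. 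Functoriality of Weil and height functions, combined with effectivity of $E_0$ and of $\sum_j b_j F_j$ to bound $h_{K_Y}$ below by $h_{\widetilde{\mu}^{*} \K_X} + \mathrm{O}(1)$, then rewrites this as the desired inequality on the image of $\widetilde{\mu}$, i.e.\ on $X \setminus \widetilde{\mu}(Z_Y)$, after passing from the $\FF$-place sum to the $\KK$-place sum via the standard splitting of local Weil functions under $\FF/\KK$.

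Third, one must descend the exceptional subset to $\KK$. Since the sum $\sum_{v \in S} \lambda_{D'}(\cdot,v)$ is intrinsically $\KK$-rational, replacing $\widetilde{\mu}(Z_Y)$ by
\[
Z \;:=\; \bigcup_{\sigma \in \mathrm{Gal}(\overline{\KK}/\KK)} \sigma\bigl(\widetilde{\mu}(Z_Y)\bigr),
\]
which is a finite union since $Z_Y$ is defined over $\FF$, produces the sought after proper $\KK$-rational Zariski closed subset. The hard part, and what genuinely distinguishes Conjecture \ref{Vojta:Conj:lc:pairs:intro} from the classical formulation, lies in the very first step: one must have access to a log resolution of $(X',D')$ that is compatible with the relative field of definition $\FF/\KK$ in a sufficiently functorial way to track both discrepancies and Galois descent. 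This is precisely the novel notion of a log resolution with respect to a finite extension of number fields flagged in the abstract, and it is the principal obstacle to converting the above reduction into an unconditional argument.
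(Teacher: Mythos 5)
The statement you are ``proving'' is a conjecture, and the paper does not prove it either: its only formal result in this direction is Proposition \ref{Vojta:log:res:implies:Vojta:lc:pairs}, a conditional implication showing that Conjecture \ref{pair:main:conj:intro}, applied on an $\FF/\KK$ log resolution of $(X',D')$ \emph{assuming such a resolution exists}, yields Conjecture \ref{Vojta:Conj:lc:pairs:intro}. Your proposal is exactly that reduction in spirit --- resolve, apply the SNC form of Vojta, use log canonicity to dominate the boundary by its reduction, use canonical singularities of $X$ to compare $h_{K_Y}$ with $h_{\widetilde{\mu}^*\K_X}$, push down --- and you correctly flag that it is conditional, so there is no false claim here; but there are two points where your route diverges from the paper's and where the bookkeeping matters.

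First, the field-of-definition handling. The paper's Conjecture \ref{pair:main:conj:intro} is stated for $\KK$-rational points on a variety defined over $\KK$ whose boundary divisor is merely defined over $\FF$, with Weil functions normalized over $\KK$; this is precisely why the resolution $X''\to X'$ must itself be defined over $\KK$ (Definition \ref{FF:KK:log:res}), and why the existence of such $\FF/\KK$ log resolutions is the genuine open problem. Your route instead takes an ordinary Hironaka log resolution over $\FF$ and invokes the classical Vojta conjecture over $\FF$ for $\FF$-points of $Y$, then Galois-descends only the exceptional set. That is a coherent alternative (the $\KK$-points of $X$ lift to $\FF$-points of $Y$, and controlling $\sum_{w\in S_{\FF}}$ controls $\sum_{v\in S}$ up to normalization constants absorbed into $\epsilon$ and $\mathrm{O}(1)$), but then your closing paragraph is inconsistent with your own argument: if you are willing to assume Vojta over $\FF$, Hironaka over $\FF$ suffices and no new resolution notion is needed; the $\FF/\KK$ log resolution is the obstacle only for the paper's $\KK$-rational formulation. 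You should decide which conditional input you are using and state it.

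Second, a technical wrinkle in your step two: the comparison $\lambda_{\pi^*D'}\leq C\,\lambda_{\Sigma}+\mathrm{O}(1)$ with $\epsilon''=\epsilon'/C$ does not close the argument, because multiplying the Vojta inequality by $C$ also multiplies $h_{K_Y}$, which need not be bounded below by a multiple of itself. The correct use of log canonicity, as in the paper's proof of Proposition \ref{Vojta:log:res:implies:Vojta:lc:pairs}, is that the divisor $\Gamma''=\pi^*(\K_{X'_{\FF}}+D')-K_{Y}$ has all coefficients $\leq 1$, hence $\Gamma''\leq \Gamma''_{\operatorname{red}}=\Sigma$ coefficientwise and $\lambda_{\Gamma''}\leq\lambda_{\Sigma}+\mathrm{O}(1)$ with no constant $C$ at all; the pullback of $D'$ is then extracted from $\Gamma''$ by adding back $h_{\K_{X''/X'}}$, which is controlled because $X$ has canonical singularities. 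With that repair your reduction matches the paper's.
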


Again, we refer to Section \ref{abs:weil:function:conventions} for details as to our conventions for the local Weil functions $\lambda_{D'}(\cdot,v)$, which are normalized with respect to $\KK$ and which allow the possibility that $D'$ is defined over a finite extension field $\FF / \KK$.  This follows the approach of \cite{Grieve:qualitative:subspace}.  The Diophantine arithmetic inequality, which we have formulated in Conjecture \ref{Vojta:Conj:lc:pairs:intro}, is of a similar flavour but goes beyond what has previously been proposed along these lines (see for instance \cite{Yasuda:2018}, \cite{Matsuzawa:2023} and the references therein).  In particular, Conjecture \ref{Vojta:Conj:lc:pairs:intro} is of an independent interest.

Another interesting question which arises in our perspective here is the notion of $\FF/\KK$ log resolution.  This is formulated in Section \ref{birational:geom:conditions}.  The main motivation for this concept is the possibility of applying a more general form of Vojta's Conjecture, Conjecture \ref{pair:main:conj:intro} below, on a suitable $\FF / \KK$ log resolution to deduce the conclusion of Conjecture \ref{Vojta:Conj:lc:pairs:intro}.

\begin{conjecture}[Compare for example with {\cite[Conjecture 14.3.2]{Bombieri:Gubler}}]\label{pair:main:conj:intro}
Working over a base number field $\KK$, let $S$ be a finite set of places.  Let $X$ be a nonsingular complete variety over $\KK$.  Let $D$ be a normal crossing divisor on $X$ and defined over a finite extension field $\FF / \KK$.    Let $L$ be a big divisor on $X$ and defined over $\KK$.  Let $\epsilon>0$.  Then there exists a proper Zariski closed subset $Z \subsetneq X$ which is such that the inequality
$$
\sum_{v \in S} \lambda_{D}(x,v) + h_{\K_X}(x) \leq \epsilon h_L(x) + \mathrm{O}(1)
$$
is valid for all $x \in X \setminus Z$.
\end{conjecture}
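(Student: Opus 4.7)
The plan is to reduce the statement to the classical formulation of Vojta's conjecture---where divisor and base field coincide---via base change to $\FF$, and then to translate back to $\KK$-normalized Weil functions and heights using the conventions of Section \ref{abs:weil:function:conventions}. I would begin by forming the base change $X_\FF := X \times_\KK \spec(\FF)$, which remains nonsingular and complete, now viewed over $\FF$. On $X_\FF$ the divisor $D$ is a normal crossing divisor defined over the base field $\FF$; the canonical divisor $\K_{X_\FF}$ coincides with the pullback of $\K_X$; and $L_\FF$ remains big. Let $S_\FF \subset M_\FF$ denote the set of places of $\FF$ lying above those in $S$.

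Next, I would apply the classical case of Vojta's conjecture---Conjecture \ref{pair:main:conj:intro} in the form of \cite[Conjecture 14.3.2]{Bombieri:Gubler}, where the divisor is already defined over the base field---to the triple $(X_\FF, D, L_\FF)$ over $\FF$, with the rescaled parameter $\epsilon' := \epsilon/[\FF:\KK]$. This produces a proper Zariski closed subset $Z_\FF \subsetneq X_\FF$, defined over $\FF$, such that
$$
\sum_{w \in S_\FF} \lambda^{\FF}_{D}(x,w) + h^{\FF}_{\K_{X_\FF}}(x) \leq \epsilon'\, h^{\FF}_{L_\FF}(x) + \mathrm{O}(1)
$$
for all $x \in (X_\FF \setminus Z_\FF)(\overline{\KK})$, where the superscripts indicate normalization over $\FF$.

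I would then convert this to a $\KK$-normalized inequality using the conventions of Section \ref{abs:weil:function:conventions}: for each $v \in M_\KK$ the $\KK$-normalized Weil function $\lambda_D(x,v)$ decomposes as a weighted sum $\sum_{w \mid v}$ of the $\FF$-normalized Weil functions $\lambda^{\FF}_D(x,w)$ with local-degree weights, and analogously $h^{\FF}_{L_\FF} = [\FF:\KK]\, h_L$. Dividing the preceding inequality through by $[\FF:\KK]$ and regrouping the sum over $S_\FF$ according to the image places in $S$ yields precisely
$$
\sum_{v \in S} \lambda_D(x,v) + h_{\K_X}(x) \leq \epsilon\, h_L(x) + \mathrm{O}(1)
$$
off of $Z_\FF$. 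Finally, to obtain an exceptional set defined over $\KK$, I would enlarge $\FF$ to its Galois closure over $\KK$ if necessary, then take $Z \subsetneq X$ to be the descent to $\KK$ of the union of the $\Gal(\FF/\KK)$-translates of $Z_\FF$---equivalently, the scheme-theoretic image $\pi(Z_\FF)$ under the finite morphism $\pi \colon X_\FF \to X$. Since $\pi$ is finite and $Z_\FF \subsetneq X_\FF$ is proper, the set $Z$ is a proper Zariski closed subset of $X$, defined over $\KK$, that contains every point where the desired inequality could fail.

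The main obstacle is the classical case of Vojta's conjecture invoked in the second step: Conjecture \ref{pair:main:conj:intro} remains conditional on this deep open problem. Beyond that, the only delicate technical point is the careful bookkeeping in the third step---ensuring that the rescaling $\epsilon' = \epsilon/[\FF:\KK]$ is chosen so that after aggregation over places of $\FF$ one recovers exactly the $\KK$-normalized statement, and that the various $\FF$-Weil functions at the places $w \mid v$, weighted by local degrees, sum to the $\KK$-Weil function at $v$. This is routine once the conventions of Section \ref{abs:weil:function:conventions} are unwound.
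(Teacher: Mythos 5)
The statement you are asked to prove is one of the paper's \emph{conjectures}: the paper gives no proof of Conjecture \ref{pair:main:conj:intro} and does not claim one, so there is no argument in the text to compare yours against. What you have written is therefore not a proof of the statement but a conditional reduction of it to the classical form of Vojta's Main Conjecture over the extension field $\FF$ (itself wide open); you acknowledge this in your final paragraph, and that honesty is appropriate, but it should be stated up front that the end product is a conjecture-to-conjecture implication, not a theorem.

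As a reduction, your argument is essentially sound and is the standard way one expects the ``divisor defined over $\FF$'' refinement to follow from the traditional conjecture: base change to $X_{\FF}$, apply \cite[Conjecture 14.3.2]{Bombieri:Gubler} over $\FF$ to the $\FF$-points coming from $X(\KK)$, push the exceptional set down along the finite morphism $X_{\FF} \to X$ (which preserves dimension, so the image is still proper and closed over $\KK$), and unwind normalizations. Two small points deserve care. First, with the conventions of Section \ref{abs:weil:function:conventions} the $\KK$-normalized Weil function $\lambda_{D}(x,v)$ is built from a \emph{single} fixed place $w \mid v$ of $\FF$, not a weighted sum over all $w \mid v$; so your claimed decomposition is really the inequality $\lambda_{D}(x,v) \leq \sum_{w \mid v} (\cdot) + \mathrm{O}(1)$ (valid because local Weil functions are bounded below), which fortunately is the direction you need. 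Second, whether the rescaling $\epsilon' = \epsilon/[\FF:\KK]$ is needed at all depends on whether heights are normalized absolutely; this is harmless but should be pinned to the precise convention of \cite{Grieve:qualitative:subspace} rather than asserted generically. With those caveats, your observation is a legitimate complement to the paper's remark that Conjecture \ref{pair:main:conj:intro} is ``formally'' more general than the traditional statement: it quantifies in what sense the extra generality is expected to be cost-free, granting the classical conjecture over every number field.
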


Conjecture \ref{pair:main:conj:intro} allows the possibility of the normal crossings divisor to be defined over a finite extension of the base number field $\KK$.  Further, by passing to an \emph{$\FF / \KK$ log resolution}, in the sense that we define in Section \ref{birational:geom:conditions}, Conjecture \ref{pair:main:conj:intro} allows the possibility to deduce forms of Conjecture \ref{Vojta:Conj:lc:pairs:intro} from Conjecture \ref{pair:main:conj:intro}.  (See Proposition \ref{Vojta:log:res:implies:Vojta:lc:pairs}.)  As explained in Example \ref{Yasuda:Eg:Conj}, Conjecture \ref{Vojta:Conj:lc:pairs:intro} itself is of a similar flavour to \cite[Conjecture 5.2]{Yasuda:2018}.  

These more general forms of Vojta's Conjecture (Conjectures \ref{Vojta:Conj:lc:pairs:intro} and \ref{pair:main:conj:intro}), which we formulate here, are very natural from the vantage point of Schmidt's Subspace Theorem.  Indeed, as explained in Example \ref{Yasuda:Eg:Conj}, Schmidt's Subspace Theorem provides the most basic known instances of the conclusion of these conjectured inequalities.  Further examples of the validity of these conjectured inequalities, in the context of certain $\K$-unstable $\QQ$-Fano varieties with canonical singularities, arise in light of the work \cite{Grieve:Divisorial:Instab:Vojta}.  (See also Example \ref{Ding:destab:eg}.)

The question of existence of $\FF/\KK$ log resolutions, quite generally, in the manner that we formulate in Section \ref{birational:geom:conditions}, appears to be an  unsolved question that is of an independent interest and does not appear to follow directly from the by now classical approach of Hironaka to the existence of log resolutions (including \cite{Hironaka:1964}, \cite{Kollar:Mori:1998} or \cite{Bierstone-et-al}).  In either case, of course Conjecture \ref{pair:main:conj:intro} implies Conjecture \ref{Vojta:Conj:lc:pairs:intro} when $\FF = \KK$.  

Here we deduce our main results (Theorems \ref{coord:sizes:main:thm}, \ref{coord:sizes:main:thm:cor} and \ref{coord:sizes:ZD:main:thm:cor}) as a consequence of Conjecture \ref{Vojta:Conj:lc:pairs:intro}.  It turns out that Theorem \ref{coord:sizes:main:thm} which is proved as an application of Conjecture \ref{Vojta:Conj:lc:pairs:intro}, implies Theorem \ref{coord:sizes:main:thm:cor} below.  This result, Theorem \ref{coord:sizes:main:thm:cor} below, extends \cite[Theorem 1.16]{Matsuzawa:2023} to the case of big line bundles on varieties with canonical singularities and nonzero effective divisors with coefficients in some finite extension of the base number field $\KK$.

\begin{theorem}\label{coord:sizes:main:thm:cor}  Fix a finite set of places $S \subset M_{\KK}$.
Let $f \colon X \rightarrow X$ be a surjective morphism, defined over $\KK$, for $X$ a projective variety over $\KK$ and having canonical singularities.  Let $D$ be a nonzero effective Cartier divisor on $X$ and defined over  some finite extension $\FF / \KK$.  Let $L$ be a big line bundle on $X$ and defined over $\KK$.  Let $x \in X(\KK)$ and let $\alpha_f(x)$ denote the arithmetic degree of $x$ with respect to $f$. 
\begin{enumerate}
\item
Assume that
\begin{itemize}
\item $\alpha_f(x) > 1$; and
\item
 $e_f(D) < \alpha_f(x)$ where 
$$e_f(D) := \left( \liminf_{n \to \infty} \left( \inf_E \frac{1}{\operatorname{ord}_E((f^{(n)})^* D)} \right)^{1/n}  \right)^{-1} \text{.}$$ 
\end{itemize}
Fix $e \in \QQ_{>0}$ with $e_f(D) \leq e < \alpha_f(x)$, fix $\epsilon_0 \in \RR_{>0}$, $\epsilon \in \QQ_{>0}$ and a positive integer $m_0$ which is such that 
\begin{itemize}
\item[(i)] $e + \epsilon < \alpha_f(x)$; and
\item[(ii)] $(e + \epsilon)^{m_0} < \alpha_f(x)^{m_0} \epsilon_0$.
\end{itemize}
Then the pair $\left(X, \frac{1}{(e + \epsilon)^{m_0}} f^{(m_0)*} D \right)$ is log canonical. 
\item
Further assume, in addition to the hypothesis of item (1), that
\begin{itemize}
\item the forward orbit $\mathrm{O}_f(x)$ is \emph{generic} in the sense that $\mathrm{O}_f(x)$ has finite intersection with all proper Zariski closed subsets of $X$;  and 
\item that the conclusion of Conjecture \ref{Vojta:Conj:lc:pairs:intro} applied to the log canonical pair $\left(X,\frac{1}{(e+\epsilon)^{m_0}}(f^{(m_0)})^*D\right)$ and $\epsilon' = 1$ holds true.
\end{itemize}
Then
$$
\lim_{n \to \infty} \frac{ \sum_{v \in S} \lambda_{D}(f^{(n)}(x),v)}{h_L(f^{(n)}(x))} = 0 \text{.}
$$
\end{enumerate}
\end{theorem}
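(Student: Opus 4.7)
\emph{Plan.} The argument proceeds in two stages matching parts (1) and (2), following the strategy of \cite{Matsuzawa:2023} but generalized to canonical singularities and divisors defined over a finite extension $\FF/\KK$.

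For part (1), I rewrite the definition as
$$
e_f(D) = \left(\liminf_{n\to\infty}\Bigl(\sup_E\operatorname{ord}_E((f^{(n)})^*D)\Bigr)^{-1/n}\right)^{-1},
$$
the supremum taken over prime divisors $E$ on all birational models of $X$.  Since $e_f(D)\leq e<e+\epsilon$, the $\liminf$ characterization gives $\sup_E\operatorname{ord}_E((f^{(n)})^*D)<(e+\epsilon)^n$ for all $n$ sufficiently large.  Condition (ii) forces $m_0$ large once $\epsilon_0$ is chosen small enough, so the bound applies at $n=m_0$.  Because $X$ has canonical singularities, the discrepancy $a_E(X)\geq 0$ for every $E$ over $X$, so
$$
\operatorname{ord}_E\!\left(\tfrac{1}{(e+\epsilon)^{m_0}}f^{(m_0)*}D\right)<1\leq 1+a_E(X),
$$
which is the discrepancy criterion certifying log canonicity of the pair.

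For part (2), I apply Conjecture~\ref{Vojta:Conj:lc:pairs:intro} with $\epsilon'=1$ to the log canonical pair $\bigl(X,\tfrac{1}{(e+\epsilon)^{m_0}}f^{(m_0)*}D\bigr)$, taking $\mu'=\id$ since $f^{(m_0)*}D$ is already $\QQ$-Cartier on $X$.  This produces a proper closed subset $Z\subsetneq X$ defined over $\KK$ with
$$
\tfrac{1}{(e+\epsilon)^{m_0}}\sum_{v\in S}\lambda_{f^{(m_0)*}D}(y,v)+h_{\K_X}(y)\leq h_L(y)+\mathrm{O}(1)\quad\text{on } X\setminus Z.
$$
Genericity of $\mathrm{O}_f(x)$ forces $f^{(n)}(x)\notin Z$ for $n\gg 0$.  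Functoriality $\lambda_{f^{(m_0)*}D}(y,v)=\lambda_D(f^{(m_0)}(y),v)+\mathrm{O}(1)$ and the estimate $|h_{\K_X}|\leq c\,h_L+\mathrm{O}(1)$ (valid because $L$ is big) then give, upon substituting $y=f^{(n)}(x)$,
$$
\sum_{v\in S}\lambda_D(f^{(n+m_0)}(x),v)\leq (e+\epsilon)^{m_0}(1+c)\,h_L(f^{(n)}(x))+\mathrm{O}(1).
$$

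Dividing through by $h_L(f^{(n+m_0)}(x))$ and invoking $h_L(f^{(n)}(x))^{1/n}\to\alpha_f(x)>1$ together with condition (ii), one obtains, for each fixed admissible $(e,\epsilon,\epsilon_0,m_0)$,
$$
\limsup_{N\to\infty}\frac{\sum_{v\in S}\lambda_D(f^{(N)}(x),v)}{h_L(f^{(N)}(x))}\leq \frac{(e+\epsilon)^{m_0}(1+c)}{\alpha_f(x)^{m_0}}<(1+c)\epsilon_0.
$$
Running the argument with $\epsilon_0\to 0$ (each new $\epsilon_0$ requires invoking part (1) to produce a fresh log canonical pair to which the posited form of Conjecture~\ref{Vojta:Conj:lc:pairs:intro} applies) drives the $\limsup$ to zero, yielding the claimed limit.  \emph{Main obstacle:} arithmetic degree delivers only the $n$th-root convergence $h_L(f^{(n)}(x))^{1/n}\to\alpha_f(x)$, which does not immediately control the ratios $h_L(f^{(n)}(x))/h_L(f^{(n+m_0)}(x))$ by $\alpha_f(x)^{-m_0}$; bridging this gap---perhaps via a canonical height construction or by exploiting the canonical singularities and generic orbit hypotheses to stabilize the asymptotic growth of $h_L\circ f^{(n)}$---is where I expect the bulk of the technical work to lie.
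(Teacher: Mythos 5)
Your overall route coincides with the paper's: part (1) is handled exactly as in Example \ref{extended:Matsuzawa:mult:eg} and Remark \ref{dynamic:lc:rmk} (the $\liminf$ characterization gives $\sup_E\operatorname{ord}_E((f^{(n)})^*D)<(e+\epsilon)^n$ for $n\gg 0$, and canonical singularities give $a(E,X)\geq 0$, whence the discrepancy/lct criterion of Example \ref{divisior:lct} certifies log canonicity), and part (2) is the same chain of inequalities as in the proof of Theorem \ref{coord:sizes:main:thm}: apply Conjecture \ref{Vojta:Conj:lc:pairs:intro} with $\epsilon'=1$, use genericity to escape the exceptional set $Z$, use functoriality of Weil functions and $h_L-h_{\K_X}\leq C_3h_L$, divide, and let $\epsilon_0\to 0$. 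Your part (1) is correct as written.

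The one genuine gap is the step you yourself flag at the end: bounding $h_L(f^{(n)}(x))/h_L(f^{(n+m_0)}(x))$ by a constant times $\alpha_f(x)^{-m_0}$. You are right that the mere $n$th-root convergence $h_L(f^{(n)}(x))^{1/n}\to\alpha_f(x)$ does not give this, and without it the argument does not close. The paper supplies the missing ingredient as Proposition \ref{useful:prop} and Corollary \ref{useful:prop:cor}, which rest on Sano's theorem \cite[Theorem 1.1]{Sano:2018}: for $H$ ample there exist $C_1,C_2>0$ and an integer $\ell\geq 0$ with
$$
C_1 n^{\ell}\alpha_f(x)^n\leq\max\{1,h_H(f^{(n)}(x))\}\leq C_2 n^{\ell}\alpha_f(x)^n
$$
for all $n\geq 1$; this two-sided polynomial-times-exponential bound immediately yields
$$
\frac{h_L(f^{(n)}(x))}{h_L(f^{(n+m_0)}(x))}\leq\frac{C_2}{C_1\,\alpha_f(x)^{m_0}}\text{,}
$$
which is exactly what your final display needs (at the cost of the harmless factor $C_2C_3/C_1$ in front of $\epsilon_0$). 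To pass from ample to big $L$ one writes $L\sim_{\QQ}A+N$ with $A$ ample and $N$ effective and works away from $\operatorname{Supp}(N)$; the genericity of $\mathrm{O}_f(x)$ is used a second time here to guarantee $f^{(n)}(x)\notin\operatorname{Supp}(N)$ for $n\gg 0$. Your instinct that a canonical-height-type construction is needed is correct (Sano's proof proceeds via the Jordan-block canonical heights of Kawaguchi--Silverman), but as submitted the proposal leaves this essential estimate unproved.
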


We prove Theorem \ref{coord:sizes:main:thm:cor} in Section \ref{main:thms:proof} as an application of Theorem \ref{coord:sizes:main:thm}.  

\begin{example}\label{hypothesis:example}
To place the hypothesis of item (1), in Theorem \ref{coord:sizes:main:thm:cor}, into its proper context, we note that we can take $e = 1$ if the iterates of $f$ are not ramified along $D$ in the sense that $\operatorname{ord}_E((f^{(n)})^*D) = 1$, if not zero, for all sufficiently large $n$ and all $E$.    In other words, the iterates of $D$ should not be ramified at the centres of $E$ on $X$.  To place this condition that the iterates of $f$ are not ramified along $D$ into perspective, note that aside from the situation that $f$ is an isomorphism, we also recall that for the case that $X$ is a $Q$-abelian variety with canonical singularities, in the sense of \cite[p.~ 246]{Meng:Zhang:2018}, that each surjective self map is \'etale in codimension $1$ as noted in \cite[Lemma 2.1]{Oguiso:2024}.  To place the hypothesis of item (2), in Theorem \ref{coord:sizes:main:thm:cor}, into perspective we refer to Section \ref{vojta:lc:pairs:sec} for situations in which the conclusion of Conjecture \ref{Vojta:Conj:lc:pairs:intro} holds true.  
\end{example}

To provide context and motivation for Theorem \ref{coord:sizes:ZD:main:thm:cor} below, let us first highlight, in Theorem \ref{Grieve:Noytaptim:fwd:orbits:main:thm}, the following very interesting and novel result from  \cite{Grieve:Noytaptim:fwd:orbits}.

\begin{theorem}[{\cite[Theorem 1.2]{Grieve:Noytaptim:fwd:orbits}}]\label{Grieve:Noytaptim:fwd:orbits:main:thm}
Working over a base number field $\KK$, let $X$ be a projective variety.  Let $f \colon X \rightarrow X$ be a surjective morphism with dynamical degree $\delta_f > 1$.  Let $L$ be a big and nef $f$-quasi-polarizable divisor on $X$.  Let $D$ be a non-zero and effective $f$-quasi-polarizable Cartier divisor on $X$ and defined over $\KK$.  Assume that $D$ is linearly equivalent to $mL$ for some $m>0$. For $n \geq 1$, let 
$$D^{(n)} = (f^{(n)})^*D \equiv \delta_f^n m L$$ 
and let $(D^{\operatorname{red}}_{\operatorname{p.i.}})^{(n)}$ be the reduced properly intersecting part of $D^{(n)}$.  Write 
$$(D^{\operatorname{red}}_{\operatorname{p.i.}})^{(n)} = D_1^{(n)}+\dots + D_{q_n}^{(n)}$$ for $D_i^{(n)}$ distinct, irreducible, reduced and properly intersecting Cartier divisors on $X$.  Assume that $D_i^{(n)}$ is linearly equivalent to $m_i^{(n)}L$ for positive integers $m_i^{(n)}$.  

Set 
$$\gamma = \gamma((D^{\operatorname{red}}_{\operatorname{p.i.}})^{(n)}) := \left( \max_i \left\{ m_i^{(n)} \right\}\right) \left( \dim X + 1\right)$$
and
$$
c_n := \frac{\sum_{i=1}^{q_n} m_i^{(n)} - \gamma}{\delta^n_f m} \text{.}
$$

Let $x \in X(\KK)$.  
Then for each finite set $S \subset M_{\KK}$ and all sufficiently small  $\epsilon > 0$, the set 
$$
\left\{ f^{(m)}(x) : \frac{\sum_{v \not \in S} \lambda_{D}(f^{(m)}(x),v) }{m h_L(f^{(m)}(x)) } \leq c_n - \epsilon \right\} \subseteq \mathrm{O}_f(x)
$$
is Zariski-non-dense.  In particular if $c_n > 0$, then the collection of $(D,S)$-integral points in $\mathrm{O}_f(x)$ are non-Zariski dense. 
\end{theorem}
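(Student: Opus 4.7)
The plan is to derive the claim from a Schmidt subspace-type inequality for properly intersecting Cartier divisors on $X$, in the spirit of Evertse--Ferretti and Ru, combined with the functoriality of local Weil and height functions, the quasi-polarization identity $h_L \circ f^{(k)} = \delta_f^{k} h_L + o(h_L)$ on big-and-nef $L$, and the product formula. To separate the coefficient $m$ (in $D \sim mL$) from the iteration index (unfortunately also written $m$ in the statement), I will use $\ell$ for the iteration index.

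First, apply Schmidt's subspace theorem to the properly intersecting divisors $D_1^{(n)}, \ldots, D_{q_n}^{(n)}$ with $D_i^{(n)} \sim m_i^{(n)} L$: for each $\epsilon_0 > 0$ this produces a proper Zariski closed $Z_n \subsetneq X$ with
$$\sum_{v \in S} \sum_{i=1}^{q_n} \lambda_{D_i^{(n)}}(y,v) \leq (\dim X + 1 + \epsilon_0) \left(\max_i m_i^{(n)}\right) h_L(y) + \mathrm{O}(1)$$
for all $y \in X(\KK) \setminus Z_n$. Writing $\sum_v \lambda_{D_i^{(n)}}(y,v) = m_i^{(n)} h_L(y) + \mathrm{O}(1)$ (from $D_i^{(n)} \sim m_i^{(n)} L$), and using the definition $\gamma = (\dim X + 1)\max_i m_i^{(n)}$, this transforms into the lower bound
$$\sum_{v \notin S} \sum_i \lambda_{D_i^{(n)}}(y,v) \geq \left(\sum_i m_i^{(n)} - \gamma - \epsilon_0 \max_i m_i^{(n)}\right) h_L(y) + \mathrm{O}(1).$$

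Next, since $\sum_i D_i^{(n)} = (D^{\mathrm{red}}_{\mathrm{p.i.}})^{(n)} \leq D^{(n)} = (f^{(n)})^* D$, one has $\sum_i \lambda_{D_i^{(n)}}(y,v) \leq \lambda_{D^{(n)}}(y,v) + \mathrm{O}(1)$; and the functoriality $\lambda_{D^{(n)}}(y,v) = \lambda_D(f^{(n)}(y),v) + \mathrm{O}(1)$, combined with the specialization $y = f^{(\ell-n)}(x)$ and the quasi-polarization $h_L(f^{(\ell)}(x)) = \delta_f^n h_L(f^{(\ell-n)}(x)) + o(h_L(f^{(\ell)}(x)))$, yield after division by $m \, h_L(f^{(\ell)}(x))$:
$$\frac{\sum_{v \notin S} \lambda_D(f^{(\ell)}(x),v)}{m \, h_L(f^{(\ell)}(x))} \geq c_n - \frac{\delta_f^{-n} \epsilon_0 \max_i m_i^{(n)}}{m} + o(1).$$
Choosing $\epsilon_0$ so that the middle term is $< \epsilon/2$ and $\ell$ large enough to absorb the $o(1)$, the ratio exceeds $c_n - \epsilon$ whenever $f^{(\ell - n)}(x) \notin Z_n$. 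Consequently, the ``bad'' iterates all lie in $f^{(n)}(Z_n) \cup \{f^{(\ell)}(x) : \ell < \ell_0\}$ for some $\ell_0$, and $\dim f^{(n)}(Z_n) \leq \dim Z_n < \dim X$ so this is a proper Zariski closed subset of $X$, giving Zariski-non-density in $\mathrm{O}_f(x)$. The integrality corollary when $c_n > 0$ is immediate: a $(D,S)$-integral $y \in \mathrm{O}_f(x)$ satisfies $\sum_{v \notin S} \lambda_D(y,v) = \mathrm{O}(1)$, so the ratio tends to $0$ and eventually falls below $c_n - \epsilon$ for $\epsilon < c_n$.

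The principal obstacle is Step 1 --- verifying a Schmidt subspace inequality with factor exactly $\dim X + 1$ for a collection of properly intersecting Cartier divisors on a possibly singular projective variety $X$, going beyond the classical hypersurfaces-in-$\PP^N$ setting --- together with the careful tracking of $\mathrm{O}(1)$ and $o(1)$ errors arising from the quasi-polarization applied pointwise along individual orbits rather than in aggregate.
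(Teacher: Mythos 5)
This theorem is not proved in the present paper --- it is quoted (with a corrected constant) from \cite{Grieve:Noytaptim:fwd:orbits} --- and your proposal reproduces essentially the argument of that source: the Subspace-Theorem-type inequality for the properly intersecting divisors $D_i^{(n)} \sim m_i^{(n)} L$, functoriality of local Weil functions under $f^{(n)}$, the comparison $h_L(f^{(\ell-n)}(x)) = \delta_f^{-n} h_L(f^{(\ell)}(x)) + \mathrm{O}(1)$ from quasi-polarizability, and the observation that the bad iterates land in $f^{(n)}(Z_n)$ together with a finite set; your computation even recovers the corrected denominator $\delta_f^n m$ rather than the misprinted $\delta_f^n m^n$. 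The step you flag as the principal obstacle is not a gap: the inequality with factor $(\dim X + 1 + \epsilon_0)\max_i m_i^{(n)}$ for properly intersecting Cartier divisors on a possibly singular projective variety is precisely the Ru--Vojta arithmetic general theorem (\cite[Corollary 1.13]{Ru:Vojta:2016}; see also \cite[Corollary 1.3]{Grieve:2018:autissier}), which this paper itself invokes in Example \ref{Ding:destab:eg}.
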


In our formulation of Theorem \ref{Grieve:Noytaptim:fwd:orbits:main:thm} here, we make note of the unfortunate misprint in the statement of \cite[Theorem 1.2]{Grieve:Noytaptim:fwd:orbits} and its proof.  Indeed, for example, the $m^n$ that appears in the definition of the constant $c_n$ there should be replaced by $m$.  Similar considerations apply to the proof.  We thank Zheng Xiao for bringing this to our attention.

Here, motivated by \cite[Theorem 1.18]{Matsuzawa:2023}, we explore what can be said more generally compared to the situation treated in Theorem \ref{Grieve:Noytaptim:fwd:orbits:main:thm} (e.g., without any assumption of quasi-polarizability) and also what can be said more generally compared to the situation treated in \cite{Matsuzawa:2023} (e.g., when $X$ has canonical singularities).  In this regard, we prove the following Theorem \ref{coord:sizes:ZD:main:thm:cor} below.  It is a significantly more general form of \cite[Theorem 1.18]{Matsuzawa:2023} for the case of divisors, on projective varieties with canonical singularities, and defined over finite extensions of the base number field.

\begin{theorem}\label{coord:sizes:ZD:main:thm:cor} Fix a finite set of places $S \subset M_{\KK}$.
Let $f \colon X \rightarrow X$ be a surjective morphism, defined over $\KK$, for $X$ a projective variety over $\KK$ and having canonical singularities.  Let $D$ be a nonzero effective Cartier divisor on $X$ and defined over  some finite extension $\FF / \KK$.  Let $L$ be a big line bundle on $X$ and defined over $\KK$.  Let $x \in X(\KK)$.
\begin{enumerate}
\item
Assume that
\begin{itemize}
\item $\alpha_f(x) > 1$; and
\item $e_f(D) < \alpha_f(x)$ where 
$$e_f(D) := \left( \liminf_{n \to \infty} \left( \inf_E \frac{1}{\operatorname{ord}_E((f^{(n)})^* D)} \right)^{1/n}  \right)^{-1} \text{.}$$ 
\item
Fix $e \in \QQ_{>0}$ with $e_f(D) \leq e < \alpha_f(x)$, fix $\epsilon_0 \in \RR_{>0}$ and $\epsilon \in \QQ_{>0}$ and a positive integer $m_0$ which is such that 
\begin{itemize}
\item[(i)] $e + \epsilon < \alpha_f(x)$; and
\item[(ii)] $(e + \epsilon)^{m_0} < \alpha_f(x)^{m_0} \epsilon_0$.
\end{itemize}
\end{itemize}
\item  Further assume, in addition to the assumptions of item (1), that
that the conclusion of Conjecture \ref{Vojta:Conj:lc:pairs:intro} applied to the case of the log canonical pair $\left(X,\frac{1}{(e+\epsilon)^{m_0}}(f^{(m_0)})^*D\right)$ and $\epsilon' = 1$ holds true.

\end{enumerate}
Then
the set
$$
\left\{ x' \in \mathrm{O}_f(x) : \frac{ \sum_{v \not  \in S} \lambda_{D} (x',v) }{h_L(x') }  \leq \liminf_{n \to \infty} \frac{ \sum_{v \in M_{\KK}} \lambda_{D}(f^{(n)}(x),v)}{h_L(f^{(n)}(x))} - \epsilon \right\}
$$
is not-Zariski dense.

\end{theorem}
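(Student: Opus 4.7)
The strategy is to follow the blueprint of Theorem \ref{coord:sizes:main:thm:cor}, replacing the generic orbit hypothesis with the geometric observation that the exceptional set supplied by the conjectural Vojta inequality itself controls where the bad points can lie. Setting $D'' := \frac{1}{(e+\epsilon)^{m_0}}(f^{(m_0)})^*D$, Theorem \ref{coord:sizes:main:thm:cor}(1) ensures that $(X, D'')$ is log canonical. By hypothesis, the conclusion of Conjecture \ref{Vojta:Conj:lc:pairs:intro}, applied with $\epsilon' = 1$, then furnishes a proper Zariski closed $\KK$-defined subset $Z \subsetneq X$ satisfying
$$
\sum_{v \in S} \lambda_{D''}(y,v) + h_{\K_X}(y) \leq h_L(y) + \mathrm{O}(1) \quad \text{for all } y \in X \setminus Z.
$$

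Next, functoriality of local Weil functions, $\lambda_{(f^{(m_0)})^*D}(y,v) = \lambda_D(f^{(m_0)}(y),v) + \mathrm{O}(1)$, combined with the definition of $D''$, converts this into an upper bound for $\sum_{v \in S}\lambda_D(f^{(m_0)}(y),v)$ by $(e+\epsilon)^{m_0} h_L(y)$ minus a correction from $h_{\K_X}$. Specializing to $y = f^{(n)}(x) \notin Z$ and writing $x' := f^{(n+m_0)}(x)$, the arithmetic degree asymptotic $h_L(f^{(n+m_0)}(x))/h_L(f^{(n)}(x)) \sim \alpha_f(x)^{m_0}$ together with condition (ii), $(e+\epsilon)^{m_0} < \alpha_f(x)^{m_0}\epsilon_0$, yields
$$
\frac{\sum_{v \in S} \lambda_D(x',v)}{h_L(x')} \leq \epsilon_0 + \mathrm{o}(1)
$$
as $n \to \infty$ along those indices for which $f^{(n)}(x) \notin Z$. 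The canonical singularities assumption on $X$ and the bigness of $L$ are invoked to absorb the $h_{\K_X}$ contribution into the $\mathrm{o}(1)$.

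On the other hand, using $\sum_{v \in M_\KK}\lambda_D(\cdot,v) = h_D(\cdot) + \mathrm{O}(1)$, the inequality defining the bad set $B$ rewrites as
$$
\frac{\sum_{v \in S}\lambda_D(x',v)}{h_L(x')} \geq \frac{h_D(x')}{h_L(x')} - \liminf_{n \to \infty}\frac{h_D(f^{(n)}(x))}{h_L(f^{(n)}(x))} + \epsilon - \mathrm{o}(1).
$$
The definition of $\liminf$ implies that for any $\delta > 0$ the ratio $h_D(f^{(m)}(x))/h_L(f^{(m)}(x))$ exceeds $\liminf_n(\cdot) - \delta$ for all sufficiently large $m$, so the right-hand side is bounded below by $\epsilon - \delta - \mathrm{o}(1)$. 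Because $\epsilon_0$ in condition (ii) may be chosen arbitrarily small (by enlarging $m_0$), we may assume $\epsilon_0 < \epsilon$, and then with $\delta < \epsilon - \epsilon_0$ this lower bound is incompatible with the upper bound of the previous step whenever $f^{(m-m_0)}(x) \notin Z$. Hence, for $x' = f^{(m)}(x) \in B$ with $m$ sufficiently large, necessarily $f^{(m-m_0)}(x) \in Z$, i.e.\ $x' \in f^{(m_0)}(Z)$.

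Finally, $f^{(m_0)}\colon X \to X$ is a morphism of projective varieties, so $f^{(m_0)}(Z)$ is Zariski closed with $\dim f^{(m_0)}(Z) \leq \dim Z < \dim X$, hence $f^{(m_0)}(Z) \subsetneq X$ is a proper closed subvariety. The bad set $B$ is therefore contained in $f^{(m_0)}(Z)$ together with the finite initial segment $\{f^{(i)}(x) : 0 \leq i \leq N_0\}$ of $\mathrm{O}_f(x)$ (arising from those finitely many small $m$ where the asymptotic estimates have not yet kicked in), and this union is a proper Zariski closed subset of $X$; thus $B$ is not Zariski dense. The main technical obstacle lies in the combined management of the canonical height term $h_{\K_X}$ and the precise arithmetic degree comparison $h_L(f^{(n+m_0)}(x)) \gtrsim \alpha_f(x)^{m_0} h_L(f^{(n)}(x))$; condition (ii) is calibrated so that the slack provided by $\epsilon_0$ absorbs these contributions.
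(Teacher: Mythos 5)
Your argument is correct and follows essentially the same route as the paper's proof: apply Conjecture \ref{Vojta:Conj:lc:pairs:intro} with $\epsilon'=1$ to the log canonical pair $\left(X,\frac{1}{(e+\epsilon)^{m_0}}(f^{(m_0)})^*D\right)$, shift indices by $m_0$, compare heights via Proposition \ref{useful:prop}, and trap the bad points inside $f^{(m_0)}(Z)\cup\operatorname{Supp}(D)$ together with a finite initial segment of the orbit. The only caution is that Proposition \ref{useful:prop} controls $h_L(f^{(n+m_0)}(x))/h_L(f^{(n)}(x))$ only up to the two-sided constants $C_1,C_2$ (not a genuine asymptotic $\sim\alpha_f(x)^{m_0}$), and the $h_{\K_X}$ term contributes a multiplicative constant (the paper's $C_3$, coming from bigness of $L$) rather than an $\mathrm{o}(1)$; both are harmless since, as you note, $\epsilon_0$ can be taken arbitrarily small relative to these constants.
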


We prove Theorem \ref{coord:sizes:ZD:main:thm:cor} in Section \ref{main:thms:proof}.  It is proved in a manner that is similar to the proof of Theorem \ref{coord:sizes:main:thm:cor}. A key point to the proof of Theorems \ref{coord:sizes:main:thm}, \ref{coord:sizes:main:thm:cor} and \ref{coord:sizes:ZD:main:thm:cor} is Proposition \ref{useful:prop} which expands on \cite[Theorem 1.1]{Sano:2018}.

\subsection*{Acknowledgements}  The first author thanks NSERC for their support through his grants DGECR-2021-00218 and RGPIN-2021-03821.  Both authors thank colleagues for their interest and discussions on related topics.  Further, they thank an anonymous referee for careful reading and helpful suggestions.

\section{Absolute values, local Weil and height functions}\label{abs:weil:function:conventions}

Our conventions about absolute values, local Weil and height functions are as in  \cite[Section 2]{Grieve:qualitative:subspace}.  For example, if $X$ is a projective variety over $\KK$ and if $D$ is a Cartier divisor on $X$ and defined over $\FF$ then fixing a \emph{presentation} 
$$\mathcal{D} = (s_D; N, \mathbf{s}; M, \mathbf{t})$$ 
of $D$, and defined over $\FF$, the corresponding local Weil function with respect to a place $v \in M_{\KK}$ for $\mathcal{D}$ and depending on $\mathcal{D}$ is given by
$$
\lambda_{D}(x,v) = \lambda_{\mathcal{D}}(x,v) := \max_{j=0,\dots,k} \min_{i = 0,\dots,\ell} \left|  \frac{s_j}{t_i s_D}(x) \right|_{v,\KK} 
\text{ for $x \in (X \setminus \operatorname{Supp}(D))(\KK)$.  }
$$

Here, we have fixed $w \in M_{\FF}$ with $w \mid v$ and
$$
\Osh_{X_{\FF}}(D) \simeq N \otimes M^{-1}
$$
for line bundles $N$ and $M$ on $X_{\FF}$ had having respective global generating sections 
$$
\mathbf{s} := (s_0,\dots,s_k) \text{ and } \mathbf{t} := (t_0,\dots,t_\ell) \text{;}
$$
$s_D$ is a meromorphic section of $\Osh_{X_{\FF}}(D)$ with associated Cartier divisor equal to $D$.  Finally, $(X \setminus \operatorname{Supp}(D))(\KK)$ is the set of those $\KK$-rational points $x \in X(\KK)$ whose image in $X_{\FF}(\FF)$ does not lie in the support of $D$.  

Local Weil functions for $\QQ$-Cartier divisors on $X$ and defined over $\FF$ are defined in the evident way via $\QQ$-linear extension of the theory of local Weil functions for Cartier divisors.

Further, we denote by $h_L(\cdot)$ the logarithmic height functions on $X$ that are determined by lines bundles $L$ on $X$ and defined over $\KK$.  Especially, if 
$$\mathbf{x} = [x_0:\dots:x_n] \in \PP^n(\KK)$$ 
then
$$
h_{\Osh_{\PP^n}(1)}(\mathbf{x}) := \sum_{v \in M_{\KK}} \max_i |x_i|_v \text{.}
$$
Height functions for $\QQ$-line bundles are defined in an evident way by $\QQ$-linear extension of the theory of height functions for line bundles.

\section{Pairs, log resolutions and discrepancies}\label{birational:geom:conditions}

In our study of Problem \ref{main:problem}, we employ concepts from higher dimensional birational geometry.  For this purpose, here we make precise our conventions about \emph{pairs}, \emph{log resolutions}, \emph{discrepancies} and related notions.   They follow closely those of \cite{Kollar:Mori:1998} (among others).  

For our purposes here a \emph{pair} $(X,D)$ consists of a geometrically normal projective variety $X$ over $\KK$ and a $\QQ$-Weil divisor $D$ on $X$ which is defined over $\FF$ and which is such that $\K_{X_{\FF}} + D$ is $\QQ$-Cartier.  Here $\K_{X_{\FF}}$ is a canonical divisor on $X_{\FF}$.

The concept of \emph{$\FF / \KK$ log resolutions} for a pair $(X,D)$, see Definition \ref{FF:KK:log:res}, is important for our perspective here.  For the case that $D$ is defined over $\KK$, it reduces to the familiar case of a log resolution.  

\begin{defn}\label{FF:KK:log:res}
Let $(X,D)$ be a pair with $X$ defined over $\KK$ and $D$ defined over a finite extension $\FF / \KK$.  By an \emph{$\FF / \KK$ log resolution} for $(X,D)$ we mean a projective birational morphism
$$\mu \colon X' \rightarrow X$$ 
from a nonsingular projective variety $X'$, which is defined over $\KK$, which has the property that  
$$\mu^{-1}_{\FF}(\operatorname{Supp} D) \bigcup \operatorname{Ex}(\mu_{\FF}) \text{,}$$ 
for $\operatorname{Ex}(\mu_{\FF})$ the exceptional set of $\mu_{\FF}$, is a divisor with strict normal crossings support.  
\end{defn}

To place Definition \ref{FF:KK:log:res} into its proper context, note that when $D$ is defined over $\KK$, existence of such log resolutions is a consequence of work of Hironaka \cite{Hironaka:1964}.  (See also \cite{Kollar:Mori:1998} and \cite{Bierstone-et-al}.)  On the other hand, in complete generality, existence of $\FF / \KK$ log resolutions, for example when $(X,D)$ is defined \emph{strictly over $\FF$} in the sense that the pair $(X_{\FF},D)$ does not descend to a pair that is defined over $\KK$, is less clear.  Nonetheless, many pairs $(X,D)$, with $D$ defined over $\FF$ do indeed admit $\FF / \KK$ log resolutions in the sense of Definition \ref{FF:KK:log:res}.  

We refrain from a further discussion here about existence of $\FF / \KK$ log resolutions in the sense that we have formulated in Definition \ref{FF:KK:log:res}.  On the other hand, we do mention that the reason for our interest in such $\FF / \KK$ log resolutions arises from considerations that surround Vojta's Main Conjecture in the form that we formulate in Conjectures \ref{pair:main:conj:intro} and \ref{Vojta:Conj:lc:pairs:intro}.    

Returning to general considerations, consider a pair $(X,D)$, with $X$ defined over $\KK$ and $D$ defined over $\FF$ and suppose given an $\FF/\KK$ log resolution
$$
\mu \colon X' \rightarrow X
$$
in the sense of Definition \ref{FF:KK:log:res}.  Write
$$\K_{X_{\FF}' / X_{\FF}} := \K_{X'_{\FF}} - \mu^* \K_{X_{\FF}} \text{,}$$
$$\K_{X_{\FF}'} + \mu_{\FF *}^{-1}(D) \equiv_{\text{$\QQ$-lin.equiv.}} \mu_{\FF}^*(\K_{X_{\FF}} + D) + \Gamma' $$ 
and 
$$\Gamma' = \sum a_i \Gamma_i'$$ 
for $\Gamma_i'$ are distinct reduced, irreducible exceptional divisors. 

As in \cite[Definition 2.34]{Kollar:Mori:1998}, we say that $(X,D)$ is a \emph{log canonical pair}, or simply that $(X,D)$ is \emph{log canonical}, when the coefficients $a_i$ of $\Gamma_i'$ for each exceptional divisor $\Gamma_i'$ have the property that $a_i \geq -1$.  

We say that a geometrically normal projective variety $X$ is \emph{$\QQ$-Gorenstein} if $\K_X$ is $\QQ$-Cartier.  Further, as in \cite{Kollar:Mori:1998}, see also \cite{Reid:1987}, we say that a $\QQ$-Gorenstein projective variety $X$ has \emph{canonical singularities} if for each resolution of singularities 
$$\mu \colon X' \rightarrow X$$ 
(defined over $\KK$)
we can write
$$
\K_{X'} \equiv_{\text{$\QQ$-lin.equiv.}} \mu^*\K_{X} + \sum_{\substack{
\text{all prime $\mu$-exceptional } \\
\text{
divisors $E_i$
}
}} a_i E_i 
$$
where $a_i \geq 0$ for all $i$.  If $a_i \geq -1$ for all $i$, then $X$ has \emph{log canonical singularities}.
Note, in particular, that when $X$ has canonical singularities $\K_{X'/X}$ is an effective $\QQ$-Cartier divisor.

\section{Multiplier ideal sheaves and log canonical thresholds}\label{lct:multiplier:ideal:sheaves}

Let $X$ be a projective variety over $\KK$ and having log canonical singularities.  Let $\FF / \KK$ be a finite extension and let $\Ish \subset \Osh_{X_{\FF}}$ be a nonzero ideal sheaf.  Let $c \in \QQ_{>0}$ be a positive rational number.  The \emph{multiplier ideal sheaf} $\mathcal{J}(X,\Ish^c)$ associated with $\Ish$ and $c$ is defined by fixing a log resolution
$$
\mu \colon X' \rightarrow X_{\FF}
$$
of $(X,\Ish)$ and setting
$$
\mathcal{J}(X,\Ish^c) := \mu_* \Osh_{X'} ( \K_{X'/X} - \lfloor c F \rfloor) \text{.}
$$
Here, $\mu$ is a projective birational morphism which is such that
\begin{itemize}
\item $X'$ is a nonsingular variety over $\FF$;
\item $\Ish \Osh_{X'} = \Osh_{X'}(-F)$ for some effective Cartier divisor $F$ on $X_{\FF}$;
\item $\operatorname{Exc}(\mu) \bigcup \operatorname{Supp} F$ is a simple normal crossings divisor on $X$ and $\operatorname{Exc}(\mu)$ is the exceptional locus; and
\item $\K_{X'/X} := \K_{X'} - \mu^* \K_{X_{\FF}}$ is the relative canonical divisor.
\end{itemize}

Let $x \in X(\overline{\KK})$.  The log canonical threshold of $\Ish$ at $x$ is defined to be
$$
\operatorname{lct}(\Ish;x) := \inf \{c \in \QQ_{>0} : \mathcal{J}(X,\Ish^c)_x \subset \mathfrak{m}_x  \} \text{.}
$$
The log canonical threshold of $(X,\Ish)$ is defined to be
$$
\operatorname(X,\Ish) := \min \{\operatorname{lct}(\Ish;x) : x \in X(\overline{\KK}) \} \text{.}
$$

\begin{example}\label{divisior:lct}
Consider the case that $\Ish = \Osh_{X_{\FF}}(-D)$ is the ideal sheaf of a nonzero effective Cartier divisor $D$ on $X$ and defined over $\FF$.  In this case, we denote
$$
\operatorname{lct}(D) = \operatorname{lct}(X,D) := \operatorname{lct}(X,\Osh_{X_{\FF}}(-D)) \text{;}
$$
then 
$$
\operatorname{lct}(D) = \operatorname{lct}(X,D)
$$
can be described, as in \cite[p.~74]{Cheltsov:Shramov:2008} and \cite[Equation 13]{He:Ru:2022}, as 
\begin{align*}
\operatorname{lct}(D) = \operatorname{lct}(X,D) & = \sup \{ c \in \QQ : (X,cD) \text{ is log canonical}\}  \\
& = \inf_E \frac{1 + a(E,X)}{\operatorname{ord}_E(D)}
\end{align*}
where the infimum is taken over all exceptional divisors $E$ over $X_{\overline{\KK}}$ and supported on some projective model $X' \rightarrow X_{\overline{\KK}}$ and where
$$a(E,X) := \ord_E(\K_{X'/X_{\overline{\KK}}})$$
is the \emph{discrepancy} of $E$ over $X$.  For later use, we note in particular, that if $X$ has canonical singularities then 
\begin{equation}\label{lct:div:inequal:eqn}
\operatorname{lct}(D) \geq \inf_E \frac{1}{\operatorname{ord}_E(D)} \text{.}
\end{equation}
\end{example}

\begin{defn}\label{mult:defn}
When $X$ has canonical singularities if $f \colon X \rightarrow X$ is a surjective morphism and if $D$ is a nonzero effective Cartier divisor on $X$ and defined over a finite extension $\FF / \KK$, we define
$$
e_f(D) := \left( \liminf_{n \to \infty} \left( \inf_E \frac{1}{\operatorname{ord}_E((f^{(n)})^* D)} \right)^{1/n}  \right)^{-1} \text{.}
$$
Here, the infimum is taken over all exceptional divisors $E$ over $X_{\overline{\KK}}$.
\end{defn}

The motivation for Definition \ref{mult:defn} arises from \cite[Lemma 7.11 and Corollary 7.13]{Matsuzawa:2023}.  We refer to Examples \ref{Matsuzawa:mult:eg} and \ref{extended:Matsuzawa:mult:eg} for a more detailed discussion.  Here, since we are working with divisors, as opposed to higher codimensional subschemes, we are able to take a more direct approach to capture, in a more robust setting, the essence of the asymptotical dynamical measures of multiplicities that are considered there.  

For later use, in the proof of our main results, we make note of an evident observation in Remark \ref{dynamic:lc:rmk} below.

\begin{remark}\label{dynamic:lc:rmk}
In the setting of Definition \ref{mult:defn}, observe that if $e \in \QQ_{>0}$ is such that $e_f(D) \leq e$ and if $\epsilon > 0$, then there exists $m_0 \geq 1$ which is such that 
$$
\operatorname{lct}((f^{(m)})^* D) \geq \frac{1}{(e+\epsilon)^m} \text{ for all $m \geq m_0$.}
$$
This observation is important to our proof of Theorems \ref{coord:sizes:main:thm:cor} and \ref{coord:sizes:ZD:main:thm:cor}.  We refer to Examples \ref{Matsuzawa:mult:eg} and \ref{extended:Matsuzawa:mult:eg} for a more detailed discussion.
\end{remark}

\section{Arithmetic inequalities for log canonical pairs}\label{vojta:lc:pairs:sec}

In the direction of Problem \ref{main:problem},  we derive instances of its conclusion as consequences of certain Diophantine arithmetic  inequalities for log canonical pairs.  (See Conjectures \ref{pair:main:conj:intro}, \ref{Vojta:Conj:lc:pairs:intro} and Theorem \ref{coord:sizes:main:thm}.)  These inequalities are in the spirit of Vojta's Main Conjecture.

Recall that Conjectures \ref{Vojta:Conj:lc:pairs:intro} and  \ref{pair:main:conj:intro} are formulated in Section \ref{intro}.  Conjecture \ref{pair:main:conj:intro} is a more traditional form of Vojta's Main Conjecture for normal crossings pairs and $\KK$-rational points. (Compare for example with \cite[Conjecture 14.3.2]{Bombieri:Gubler}.)   However, our formulation here is still a more general form of the traditional formulation of the conjecture.  Indeed, here we allow the normal crossings divisor to have field of definition some finite extension of the given base number field.  Developing the theory in this way, in the presence of a given field extension $\FF / \KK$, is significant for applications and is in line with the perspective of \cite{Grieve:Divisorial:Instab:Vojta}, \cite{Grieve:points:bounded:degree}, \cite{Grieve:qualitative:subspace} (among others).  The purpose of this section is to make several remarks that surround Conjectures \ref{Vojta:Conj:lc:pairs:intro} and  \ref{pair:main:conj:intro}.

In Proposition \ref{Vojta:log:res:implies:Vojta:lc:pairs} below, starting with Conjecture \ref{pair:main:conj:intro}, we show that it implies Conjecture \ref{Vojta:Conj:lc:pairs:intro}, which is  a form of \emph{Vojta's Main Conjecture for log canonical pairs}.  First, in
Examples \ref{Yasuda:Eg:Conj}, \ref{SchmidtsSubSpaceThm:Eg} and \ref{Ding:destab:eg} below, we indicate the manner in which a special case of Conjecture \ref{Vojta:Conj:lc:pairs:intro} fits into the Diophantine arithmetic inequalities that have been proposed in \cite[Example 5.3]{Yasuda:2018}.

\begin{example}\label{Yasuda:Eg:Conj}
Consider the case that $X$ is nonsingular and that $D$ is a simple normal crossings divisor on $X$ and defined over $\KK$.  Then $(X,D)$ is log canonical and, as in  \cite[Example 5.3]{Yasuda:2018}, the conclusion of Conjecture \ref{Vojta:Conj:lc:pairs:intro} takes the form of \cite[Conjecture 5.2]{Yasuda:2018}. 
\end{example}

\begin{example}\label{SchmidtsSubSpaceThm:Eg}
The traditional form of Schmidt's Subspace Theorem for $\PP^n$ and linear forms with coefficients in $\FF$, see for example \cite{Grieve:qualitative:subspace} or \cite{Bombieri:Gubler}, gives the most basic unconditional instance of the conclusion of Conjecture \ref{Vojta:Conj:lc:pairs:intro}.  
\end{example}

\begin{example}\label{Ding:destab:eg}
Suppose that $X$ is a $\QQ$-Fano variety over $\KK$ and that $X$ is \emph{Ding $\K$-destabilized}, compare for instance with \cite[Theorem 1.2]{Fujita:2018:AJM}, by a properly intersecting log canonical pair $(X',B)$ in the sense that the inequality 
$$
1 \leq \operatorname{lct}(X;B_i) \leq \frac{ \int_0^{\infty} \Vol_{X'_{\FF}}(- \mu^* \K_{X_{\FF}} - t B_i) \mathrm{d} t }{ \Vol_{X}(-\K_X) } 
$$
holds true.
Here 
$$\mu \colon X' \rightarrow X$$ is a proper model of $X$, defined over $\KK$, 
$$B = \sum_{i = 1}^q B_i$$ 
is a nonzero effective Cartier divisor on $X'_{\FF}$ with the property that its irreducible components $B_1,\dots,B_q$ intersect properly and $\Vol(\cdot)$ is the \emph{volume function} \cite[Definition 2.2.31]{Laz}.  In this context, the conclusion of Conjecture \ref{Vojta:Conj:lc:pairs:intro} follows as a consequence of the Ru-Vojta arithmetic general theorem (compare for example with \cite[Corollary 1.3]{Grieve:2018:autissier}, \cite[Corollary 1.13]{Ru:Vojta:2016}, \cite[Theorem 1.1]{Grieve:Divisorial:Instab:Vojta} or \cite[Theorem 1.2]{Grieve:chow:approx}).
\end{example}

As additional evidence towards Conjecture \ref{Vojta:Conj:lc:pairs:intro}, our first result shows that it is implied by the conclusion of Vojta's conjecture applied to an $\FF / \KK$ log resolution of $(X',D')$ (assuming that such an $\FF / \KK$ log resolution indeed exists).  This is formulated in more precise terms in the following way.  Proposition \ref{Vojta:log:res:implies:Vojta:lc:pairs} complements \cite[Proposition 5.4]{Yasuda:2018}. 

\begin{proposition}\label{Vojta:log:res:implies:Vojta:lc:pairs}
Let $X$ be a projective variety over $\KK$ and having canonical singularities. Let $D'$ be a nonzero effective effective $\QQ$-Cartier divisor over $X$, defined over $\FF$ and supported as a $\QQ$-Cartier divisor on $X'_{\FF}$ for  some proper model 
$$\mu' \colon X' \rightarrow X$$ 
of $X$ and defined over $\KK$. 

Assume that $(X',D')$ is log canonical and that $(X',D')$ admits an $\FF / \KK$ a log resolution 
$$
\mu'' \colon X'' \rightarrow X'
$$
in the sense of Definition \ref{FF:KK:log:res}.  

Write 
$$
\K_{X''_{\FF}}  + \Gamma'' = (\mu''_{\FF})^*(\K_{X'_{\FF}}+ D')
$$
where 
$$\Gamma'' = \sum_i b_i \Gamma_i''$$
with $b_i \leq 1$  and $\Gamma_i''$ are distinct reduced and irreducible divisors. 
Set 
$$
\Gamma_{\operatorname{red}}'' = \sum_i \Gamma_i'' \text{.}
$$

Fix a finite set of places $S \subset M_{\KK}$.  Let $\epsilon > 0$ and 
$$L' = (\mu')^* L$$ 
for $L$ a big line bundle on $X$.  

Then the conclusion of Vojta's Conjecture, in the form of Conjecture \ref{pair:main:conj:intro}, applied to $(X'', \Gamma_{\operatorname{red}}'')$ with respect to 
$$L'' = (\mu'')^* L'$$ 
and for $\epsilon > 0$ implies the conclusion of Conjecture \ref{Vojta:Conj:lc:pairs:intro} for $(X',D')$ with respect to $L'$ and $\epsilon' = \epsilon > 0$.
\end{proposition}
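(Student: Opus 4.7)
The plan is to apply Conjecture \ref{pair:main:conj:intro} on $X''$ to the normal crossings pair $(X'', \Gamma''_{\text{red}})$ and to transfer the resulting inequality down to $X'$ (hence to $X$) via functoriality of heights and Weil functions under $\mu''$ and $\mu'$.  First I would apply Conjecture \ref{pair:main:conj:intro} to $(X'', \Gamma''_{\text{red}})$ with respect to $L'' = (\mu'')^* L'$ and $\epsilon > 0$; this produces a proper Zariski closed subset $Z''_0 \subsetneq X''$ such that
$$
\sum_{v \in S} \lambda_{\Gamma''_{\text{red}}}(x'', v) + h_{\K_{X''}}(x'') \leq \epsilon\, h_{L''}(x'') + \mathrm{O}(1)
$$
for all $x'' \in X'' \setminus Z''_0$.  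Pulling the target conclusion of Conjecture \ref{Vojta:Conj:lc:pairs:intro} back to $X''$ via $\mu''$, and using functoriality for $S$-truncated Weil functions and global heights, the matter reduces to checking that
$$
\sum_{v\in S} \lambda_{(\mu'')^* D' - \Gamma''_{\text{red}}}(x'',v) + h_{(\mu'\mu'')^* \K_X - \K_{X''}}(x'') \leq \mathrm{O}(1)
$$
on the complement of a suitable proper closed subset of $X''$.

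The algebraic input that drives this bound is the given identity $\K_{X''} + \Gamma'' = (\mu'')^*(\K_{X'} + D')$, combined with two effectivity facts: the log canonicity of $(X', D')$ yields $1 - b_i \geq 0$; and the canonical singularities of $X$ force $E := \K_{X'} - (\mu')^* \K_X$ to be an effective $\mu'$-exceptional divisor, while $\K_{X''/X} := \K_{X''} - (\mu'\mu'')^* \K_X$ is an effective $\QQ$-Cartier divisor on $X''$ (since $\mu'\mu''$ is a resolution of $X$ and $X''$ is nonsingular).  A direct computation using these facts yields
$$
(\mu'')^* D' - \Gamma''_{\text{red}} + (\mu' \mu'')^* \K_X - \K_{X''} = -F,
$$
where $F := (\mu'')^* E + \sum_i (1 - b_i) \Gamma''_i$ is an effective $\QQ$-Cartier divisor.

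Next I would decompose $h_{(\mu' \mu'')^* \K_X - \K_{X''}}(x'') = \sum_v \lambda_{-\K_{X''/X}}(x'', v) + \mathrm{O}(1)$, split it into its contributions from $v \in S$ and from $v \notin S$, and recombine the $S$-parts with $\sum_{v \in S}\lambda_{(\mu'')^* D' - \Gamma''_{\text{red}}}(x'', v)$ using the displayed identity.  The error term then becomes
$$
\sum_{v \in S} \lambda_{-F}(x'', v) + \sum_{v \notin S} \lambda_{-\K_{X''/X}}(x'', v) + \mathrm{O}(1),
$$
each piece of which is $\leq \mathrm{O}(1)$ off $\operatorname{Supp}(F) \cup \operatorname{Supp}(\K_{X''/X})$: the first because $F$ is effective and $S$ is finite, the second because the local Weil functions of an effective divisor are bounded below by zero for all but finitely many places, so the infinite tail sum is uniformly bounded above.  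Taking $Z''$ to be the union of $Z''_0$ with these two supports and setting $Z := \mu'(\mu''(Z'')) \cup \operatorname{Ex}(\mu') \subsetneq X$, functoriality delivers the desired conclusion on $X \setminus Z$.  The principal expected obstacle is bookkeeping of fields of definition: $D'$, $\Gamma''$, and $\Gamma''_{\text{red}}$ are defined only over $\FF$, while Conjecture \ref{Vojta:Conj:lc:pairs:intro} requires $Z \subsetneq X$ to be defined over $\KK$, so one must carefully track the $\FF/\KK$-normalizations of Weil functions from Section \ref{abs:weil:function:conventions} and verify that the various supports and exceptional loci above assemble into a $\KK$-defined closed subset.
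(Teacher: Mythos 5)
Your proposal is correct and follows essentially the same route as the paper's proof: apply Conjecture \ref{pair:main:conj:intro} to $(X'',\Gamma''_{\operatorname{red}})$ with $L''=(\mu'')^*L'$, then descend using that $\Gamma''_{\operatorname{red}}-\Gamma''=\sum_i(1-b_i)\Gamma_i''$ is effective (log canonicity) and that the relative canonical divisors over $X$ are effective (canonical singularities), together with boundedness of Weil functions of effective divisors away from their supports. Your packaging of all the error terms into the single effective divisor $F=(\mu'')^*E+\sum_i(1-b_i)\Gamma_i''$ and the explicit $S$ versus non-$S$ splitting is only an organizational variant of the paper's chain of inequalities, so there is nothing substantive to add.
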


\begin{proof}
Write 
$$
\Gamma'' = (\mu''_{\FF})^*(\K_{X'_{\FF}} + D') - \K_{X''_{\FF}} = \sum b_i E_i'' \text{.}
$$
Then $\Gamma''$ has strict normal crossings support and has coefficients $b_i \leq 1$.  

So, in particular, $\Gamma_{\operatorname{red}}''$ is a strict normal crossings divisors and further 
$$ \Gamma'' \leq \Gamma_{\operatorname{red}}'' \text{.}$$

Now, by the conclusion of Vojta's Conjecture, in the form of Conjecture \ref{pair:main:conj:intro}, applied to $\Gamma_{\operatorname{red}}''$ on $X''$ with respect to $L''$ and $\epsilon' > 0$, there is a proper Zariski closed subset $$Z'' \subsetneq X''$$ 
which is defined over $\KK$ and which is such that 
\begin{multline}\label{prop:proof:eqn1}
\sum_{v \in S} \lambda_{\Gamma''}(\cdot,v) + h_{\K_{X''}}(\cdot)  \\
\leq \sum_{v \in S} \lambda_{\Gamma_{\operatorname{red}}''}(\cdot,v) + h_{\K_{X''}}(\cdot)  \leq 
\epsilon h_{L''}(\cdot) + \mathrm{O}(1) \text{ on $X'' \setminus Z''$.}
\end{multline}
In what follows at times we implicitly enlarge $Z''$ if needed.

Now, since
$$
\Gamma'' = (\mu_{\FF}'')^*(\K_{X'_{\FF}} + D') - \K_{X''_{\FF}}
$$
we deduce from Equation \eqref{prop:proof:eqn1} that
\begin{multline}\label{prop:proof:eqn2}
\sum_{v \in S} \lambda_{(\mu''_{\FF})^*(D')} (\cdot,v) + \sum_{v \in S} \lambda_{\mu''^*\K_{X'}}(\cdot,v) - \sum_{v \in S} \lambda_{\K_{X''}}(\cdot,v) + h_{\K_{X''}}(\cdot) \\ \leq \epsilon h_{L''}(\cdot) + \mathrm{O}(1) \text{ over $X' \setminus Z'$.}
\end{multline}
Further, since
$$\K_{X'' / X'} = \K_{X''} - (\mu'')^* \K_{X'}$$ 
is a $\QQ$-Cartier divisor, Equation \eqref{prop:proof:eqn2} can be rewritten in the form
\begin{multline}\label{prop:proof:eqn3}
\sum_{v \in S} \lambda_{(\mu''_{\FF})^*(D')}(\cdot,v) - \sum_{v \in S} \lambda_{\K_{X''/X'}}(\cdot,v) + h_{\K_{X''}}(\cdot) \\ \leq \epsilon h_{L''}(\cdot) + \mathrm{O}(1) \text{ over $X' \setminus Z'$.}
\end{multline}
Therefore
\begin{multline}\label{prop:proof:eqn4}
\sum_{v \in S} \lambda_{(\mu''_{\FF})^*(D')}(\cdot,v) \\ \leq \epsilon h_{L''}(\cdot) - h_{\K_{X''}}(\cdot) + \sum_{v \in S} \lambda_{\K_{X''/X'}}(\cdot,v) + \mathrm{O}(1) \text{ over $X' \setminus Z'$.}
\end{multline}
Since 
$$\K_{X'' / X'} = \K_{X''} - (\mu'')^* \K_{X'} \geq 0 \text{ and } \K_{X'} \geq (\mu')^* \K_X$$ 
it follows using Equation \eqref{prop:proof:eqn4} that
\begin{multline}\label{prop:proof:eqn5}
 \sum_{v \in S} \lambda_{(\mu''_{\FF})^*(D')}(\cdot,v) 
 \leq  \epsilon h_{L''}(\cdot) - h_{\K_{X''}}(\cdot) + h_{\K_{X''/X'}}(\cdot) + \mathrm{O}(1) \\ 
 \leq \epsilon h_{L''}(\cdot) - h_{(\mu'')^* \K_{X'}}(\cdot) + \mathrm{O}(1) \text{ over $X' \setminus Z'$.}
\end{multline}
Therefore, in light of Equation \eqref{prop:proof:eqn5}, the conclusion is that 
$$
\sum_{v \in S} \lambda_{D'}(\cdot,v) \leq \epsilon h_{L'}(\cdot) - h_{\mu^* \K_{X'}}(\cdot) + \mathrm{O}(1) \text{ over $X' \setminus Z'$}
$$
for some proper Zariski closed subset $Z' \subsetneq X'$.
\end{proof}

\section{Preliminaries about arithmetic dynamical degrees}
Our main result towards Problem \ref{main:problem} is Theorem \ref{coord:sizes:main:thm}.  Its formulation utilizes the concept of  arithmetic dynamical degree from \cite{Kawaguchi:Silverman:2016}.  Here, we fix the most basic preliminary details in this regard which are important for our purposes here.

Let $X$ be a geometrically normal projective variety over a number field $\KK$.    Let $h_H(\cdot)$ be a logarithmic height function on $X$ that is associated to an ample line bundle $H$ on $X$ and defined over $\KK$.  Set 
$$
h^+_H(\cdot) := \max \{ h_H(\cdot),1 \} \text{.}
$$

Let $f \colon X \rightarrow X$ be a surjective $\KK$-morphism.  Then the \emph{arithmetic degree} of $x \in X(\overline{\KK})$ with respect to $f$ is the well-defined quantity (i.e., independent of $H$)
$$
\alpha_f(x) = \lim_{n \to \infty} h_H^+(f^{(n)}(x))^{1/n} \text{.}
$$
Here $f^{(n)} \colon X \rightarrow X$ is the $n$th interate of $f$.

In our study of Problem \ref{main:problem} a key role is played by the following more flexible form of \cite[Proposition 1.5]{Matsuzawa:2023}.  

\begin{proposition}\label{useful:prop}  Let $L$ be a big line bundle on a geometrically normal projective variety $X$ and defined over $\KK$.   Use $\QQ$-linear equivalence to write $L \sim_{\QQ} A + N$ for $A$ ample and $N$ effective.  Let $f \colon X \rightarrow X$ be a surjective morphism. Let $x \in X(\KK)$.   There exists a proper Zariski closed subset $Z \subsetneq X$ together with a non-negative integer $\ell$ and positive real numbers $C_1$, $C_2$ which are such that
$$
C_1 n^{\ell} \alpha_f(x)^n \leq \max \{1, h_L(f^{(n)}(x))\} \leq C_2 n^{\ell} \alpha_f(x)^n 
$$
for all $n \geq 1$ with the property that $f^{(n)}(x) \not \in \operatorname{Supp}(N)$.  Here $\alpha_f(x)$ is the arithmetic degree of $x$ with respect to $f$.
\end{proposition}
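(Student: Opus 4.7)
The plan is to reduce the statement to its known ample version, namely \cite[Proposition 1.5]{Matsuzawa:2023}, and then propagate the resulting estimate along the decomposition $L \sim_{\QQ} A + N$.

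First, I would apply \cite[Proposition 1.5]{Matsuzawa:2023} to the ample $\QQ$-divisor $A$. This produces a non-negative integer $\ell$ and positive constants $C'_1, C'_2$ such that
$$
C'_1 n^{\ell} \alpha_f(x)^n \leq \max\{1, h_A(f^{(n)}(x))\} \leq C'_2 n^{\ell} \alpha_f(x)^n
$$
for every $n \geq 1$. In the ample case no exceptional set is needed, because the Weil height against an ample divisor is bounded below by a constant on all of $X$.

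Second, I would carry out two Weil height comparisons. The equivalence $L \sim_{\QQ} A + N$ together with the Weil height machine gives $h_L = h_A + h_N + \mathrm{O}(1)$. Since $N$ is effective, $h_N \geq - \mathrm{O}(1)$ on $X \setminus \operatorname{Supp}(N)$, so
$$
h_L \geq h_A - \mathrm{O}(1) \text{ on } X \setminus \operatorname{Supp}(N).
$$
For the reverse inequality, I would choose a positive integer $k$ such that $kA - L$ is ample, which is possible because $A - L/k$ lies in the open ample cone once $k$ is sufficiently large. Then $h_{kA-L} \geq -\mathrm{O}(1)$ on all of $X$, and the identity $h_L = k\, h_A - h_{kA - L} + \mathrm{O}(1)$ yields
$$
h_L \leq k\, h_A + \mathrm{O}(1) \text{ on } X.
$$
Taking $Z := \operatorname{Supp}(N)$ and substituting these two comparisons into the Matsuzawa estimates, after absorbing the additive $\mathrm{O}(1)$ terms into the constants and invoking the $\max\{1, \cdot\}$ truncation, produces the desired double inequality with final constants $C_1, C_2$ depending on $C'_1, C'_2, k$ and on the implicit comparison constants.

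The main technical nuance I anticipate is the bookkeeping of the $\mathrm{O}(1)$ terms against the growth $n^{\ell} \alpha_f(x)^n$. When $\alpha_f(x) > 1$, or when $\ell \geq 1$, the dominant term absorbs any additive constant for $n$ large and one adjusts $C_1, C_2$ on the finitely many remaining small values of $n$. In the borderline case $\alpha_f(x) = 1$ and $\ell = 0$ the $\max\{1, \cdot\}$ truncation is essential to absorb the constants uniformly in $n$; this is the only place where the truncation plays a substantive role. Apart from this careful accounting, the argument is a direct chain of inequalities, with all the genuine dynamical content supplied by the invoked ample case.
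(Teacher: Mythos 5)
Your proposal is correct and follows essentially the same route as the paper, which likewise first invokes the known ample case (cited there via Sano's growth theorem, equivalently Matsuzawa's Proposition 1.5) and then passes to the big case via $L \sim_{\QQ} A + N$, working outside $\operatorname{Supp}(N)$. The paper leaves the height comparisons and the absorption of the $\mathrm{O}(1)$ terms implicit; your write-up supplies exactly those details, and your handling of the borderline case $\alpha_f(x)=1$, $\ell=0$ via the $\max\{1,\cdot\}$ truncation is the right bookkeeping.
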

\begin{proof}
The first step is to deduce the case that $L$ is ample.  This goes through exactly as in \cite[Proof of Theorem 1.1]{Sano:2018}.  
To boost this to the case that $L$ is big, we using $\QQ$-linear equivalence to write $L \sim_{\QQ} A + N$ for $A$ ample and $N$ effective, we then deduce the result outside of the support of $N$.
\end{proof}

\begin{corollary}\label{useful:prop:cor}
Let $L$ be a big line bundle on a geometrically normal projective variety $X$ and defined over $\KK$.   Use $\QQ$-linear equivalence to write $L \sim_{\QQ} A + N$ for $A$ ample and $N$ effective.   Let $f \colon X \rightarrow X$ be a surjective morphism.  Let $x \in X(\KK)$.  Then there exists constants $C_1, C_2 > 0$ and $\ell \geq 0$ which are such that for all pairs of positive integers $n \geq m \geq 0$, it holds true that
$$
\frac{h_L(f^{(n-m)}(x))}{h_L(f^{(n)}(x))} \leq \frac{C_2(n-m)^\ell \alpha_f(x)^{n-m}}{C_1 n^\ell \alpha_f(x)^n} = \frac{C_2(n-m)^\ell}{C_1 n^\ell \alpha_f(x)^m} 
$$ 
 for $f^{(n-m)}(x), f^{(n)}(x) \not \in \operatorname{Supp}(N)$.
\end{corollary}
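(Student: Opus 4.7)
The plan is to view Corollary \ref{useful:prop:cor} as a direct book-keeping consequence of Proposition \ref{useful:prop}, applied simultaneously at the two iterate indices $n-m$ and $n$. Proposition \ref{useful:prop}, with inputs $X$, $L$, $f$, $x$ and the chosen decomposition $L \sim_{\QQ} A + N$, supplies a single triple of constants $C_1, C_2 > 0$ and $\ell \geq 0$ for which
$$
C_1 k^{\ell} \alpha_f(x)^k \;\leq\; \max\{1, h_L(f^{(k)}(x))\} \;\leq\; C_2 k^{\ell} \alpha_f(x)^k
$$
holds for every positive integer $k$ with $f^{(k)}(x) \not\in \operatorname{Supp}(N)$. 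Since the hypothesis of the corollary assumes $f^{(n-m)}(x), f^{(n)}(x) \not\in \operatorname{Supp}(N)$, both the upper and lower estimates are available at $k = n-m$ and at $k = n$.

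First, I would apply the upper estimate at $k = n-m$ to dominate the numerator, using $h_L(f^{(n-m)}(x)) \leq \max\{1, h_L(f^{(n-m)}(x))\} \leq C_2(n-m)^\ell \alpha_f(x)^{n-m}$. Next, I would apply the lower estimate at $k = n$ to bound the denominator from below by $C_1 n^\ell \alpha_f(x)^n$. Taking the quotient of these two bounds produces the asserted inequality, and the simplification $\alpha_f(x)^{n-m}/\alpha_f(x)^n = \alpha_f(x)^{-m}$ gives the right-most form.

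The only minor issue to address is the $\max\{1,\cdot\}$ in Proposition \ref{useful:prop}: the upper bound on the numerator is unaffected since $h_L \leq \max\{1, h_L\}$, and the lower bound on the denominator transfers to $h_L(f^{(n)}(x))$ itself in the regime where $C_1 n^\ell \alpha_f(x)^n \geq 1$, which can be arranged by enlarging $C_1$ at the cost of a harmless multiplicative factor absorbed into $C_1, C_2$. No genuine obstacle arises; the statement is essentially a formal division of the two estimates delivered by Proposition \ref{useful:prop}.
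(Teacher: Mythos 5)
Your argument is correct and is exactly the deduction the paper intends: the paper's own proof of the corollary is the single line that it ``is deduced easily from Proposition \ref{useful:prop}'', and dividing the upper estimate at $k=n-m$ by the lower estimate at $k=n$ is that deduction. One small remark on your last paragraph: the clean way to dispose of the $\max\{1,\cdot\}$ in the denominator is to observe that whenever $C_1 n^\ell \alpha_f(x)^n > 1$ the maximum must be attained by $h_L(f^{(n)}(x))$ itself, so the lower bound transfers automatically; ``enlarging $C_1$'' goes in the wrong direction for a lower bound, and in any case the paper sidesteps the issue when applying the corollary by working away from a proper Zariski closed subset on which $h_L(\cdot) \geq 1$.
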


\begin{proof}
This is deduced easily from Proposition \ref{useful:prop}.
\end{proof}

\section{Asymptotic dynamics of coordinate sizes along divisors}\label{coord:sizes:main:thm:sec}

Theorem \ref{coord:sizes:main:thm} below is our main result towards Problem \ref{main:problem}.  It complements \cite[Theorem 1.16]{Matsuzawa:2023}  and should be seen as a more general instance of that result.  As an application of Theorem \ref{coord:sizes:main:thm}, we use it to prove Theorem \ref{coord:sizes:main:thm:cor}.  Theorem \ref{coord:sizes:ZD:main:thm:cor} is proved using similar techniques.

\begin{theorem}\label{coord:sizes:main:thm}
Let $X$ be a geometrically normal projective variety and defined over $\KK$.  Assume that $X$ has canonical singularities.  Let $L$ be a big line bundle on $X$ and defined over $\KK$.  Let $D$ be a nonzero and effective $\QQ$-Cartier divisor on $X$.  We allow the possibility that $D$ is defined over some finite extension $\FF / \KK$.  Let $f \colon X \rightarrow X$ be a surjective morphism.  Let $x \in X(\KK)$.  Further
\begin{itemize}
\item{assume that the forward orbit $\mathrm{O}_f(x)$ is \emph{generic} in the sense that the forward orbit $\mathrm{O}_f(x)$ has finite intersection with all proper Zariski closed subsets of $X$;}
\item{assume that $\alpha_f(x) > 1$;}
\item{
choose $e \in \QQ_{> 0} $ so that $0 < e < \alpha_f(x)$;
}
\item{
let $\epsilon_0 > 0$  be a positive real number;
}
\item{
let $\epsilon > 0$ be a rational number such that $e + \epsilon < \alpha_f(x)$ and fix $m_0$ and $m \geq m_0$ so that 
$$
\frac{(e + \epsilon)^m}{\alpha_f(x)^m} < \epsilon_0 \text{; and}
$$
}
\item{
assume that $\left(X,\frac{1}{(e+\epsilon)^m}(f^{(m)})^*D\right)$ is log-canonical for some $m \geq m_0$ and that the conclusion of Conjecture \ref{Vojta:Conj:lc:pairs:intro} applied to $\left(X,\frac{1}{(e+\epsilon)^m}(f^{(m)})^*D\right)$ and $\epsilon' = 1$ holds true.
}
\end{itemize}

Then
$$
\lim_{n \to \infty} \frac{ \sum_{v \in S} \lambda_{D}(f^{(n)}(x),v)}{h_L(f^{(n)}(x))} = 0 \text{.}
$$

\end{theorem}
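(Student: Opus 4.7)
The plan is to apply Conjecture \ref{Vojta:Conj:lc:pairs:intro} directly to the log canonical pair $\left(X, \frac{1}{(e+\epsilon)^m}(f^{(m)})^*D\right)$, taking the proper model to be $\mu' = \id_X$ and $\epsilon' = 1$. This yields a proper Zariski closed subset $Z \subsetneq X$, defined over $\KK$, and an inequality
$$
\frac{1}{(e+\epsilon)^m}\sum_{v \in S} \lambda_{(f^{(m)})^*D}(y,v) + h_{\K_X}(y) \leq h_L(y) + \mathrm{O}(1)
$$
valid on $X \setminus Z$. By the genericity hypothesis, $y = f^{(n-m)}(x)$ lies outside $Z$ for all $n \gg 0$.

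Next, I would invoke functoriality of local Weil functions to identify $\lambda_{(f^{(m)})^*D}(y,v) = \lambda_D(f^{(m)}(y),v) + \mathrm{O}(1)$ and then substitute $y = f^{(n-m)}(x)$, so that $f^{(m)}(y) = f^{(n)}(x)$.  The Vojta inequality rewrites as
$$
\frac{1}{(e+\epsilon)^m}\sum_{v \in S} \lambda_D(f^{(n)}(x),v) \leq h_L(f^{(n-m)}(x)) - h_{\K_X}(f^{(n-m)}(x)) + \mathrm{O}(1) \text{.}
$$
Because $L$ is big, a standard comparison shows $|h_{\K_X}(\cdot)| \leq C_0 \, h_L(\cdot) + \mathrm{O}(1)$ outside a further Zariski closed subset, which by genericity meets $\mathrm{O}_f(x)$ only finitely often.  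Absorbing this bound yields
$$
\sum_{v \in S} \lambda_D(f^{(n)}(x),v) \leq (e+\epsilon)^m (1 + C_0)\, h_L(f^{(n-m)}(x)) + \mathrm{O}(1) \text{.}
$$

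The last ingredient is to compare $h_L$ at the iterates $n$ and $n-m$.  Writing $L \sim_{\QQ} A + N$ with $A$ ample and $N$ effective, genericity of $\mathrm{O}_f(x)$ ensures $f^{(n)}(x) \notin \operatorname{Supp}(N)$ for $n \gg 0$, and Corollary \ref{useful:prop:cor} then provides
$$
\frac{h_L(f^{(n-m)}(x))}{h_L(f^{(n)}(x))} \leq \frac{C_2 (n-m)^\ell}{C_1\, n^\ell\, \alpha_f(x)^m} \text{.}
$$
Since $h_L(f^{(n)}(x)) \to \infty$ by Proposition \ref{useful:prop} together with $\alpha_f(x) > 1$, dividing the previous inequality by $h_L(f^{(n)}(x))$ and taking $\limsup_{n \to \infty}$ gives
$$
\limsup_{n \to \infty} \frac{\sum_{v \in S} \lambda_D(f^{(n)}(x),v)}{h_L(f^{(n)}(x))} \leq \frac{C_2(1+C_0)}{C_1} \cdot \frac{(e+\epsilon)^m}{\alpha_f(x)^m} < \frac{C_2(1+C_0)}{C_1}\, \epsilon_0 \text{.}
$$
Since the left-hand side is independent of $\epsilon_0$, and the hypotheses may be arranged for arbitrarily small $\epsilon_0 > 0$ by enlarging $m$, the $\limsup$ must vanish, which is the claimed limit.

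The main obstacle, beyond the careful bookkeeping of local Weil and height functions relative to the finite extension $\FF / \KK$, will be the management of the $h_{\K_X}$ correction term and verifying that all the exceptional Zariski closed subsets that accumulate along the way meet $\mathrm{O}_f(x)$ in only finitely many points.  The latter is exactly where the genericity hypothesis on the orbit is indispensable, while the former relies critically on the bigness of $L$ to dominate any ambient height contribution coming from $\K_X$.
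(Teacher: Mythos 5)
Your proposal is correct and follows essentially the same route as the paper's proof: apply Conjecture \ref{Vojta:Conj:lc:pairs:intro} to the pair $\left(X,\frac{1}{(e+\epsilon)^m}(f^{(m)})^*D\right)$ with $\epsilon'=1$, use functoriality to pass from $\lambda_{(f^{(m)})^*D}$ at $f^{(n-m)}(x)$ to $\lambda_D$ at $f^{(n)}(x)$, absorb the $h_{\K_X}$ term via a constant multiple of $h_L$ (the paper's $C_3$ plays the role of your $1+C_0$), and finish with Corollary \ref{useful:prop:cor}, genericity of the orbit, and the arbitrariness of $\epsilon_0$. The only cosmetic difference is that the paper bounds $(e+\epsilon)^m$ by $\epsilon_0\alpha_f(x)^m$ before dividing by $h_L(f^{(n)}(x))$ rather than after, which changes nothing.
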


\begin{proof}

Working away from a proper Zariski closed subset of $X$ we may and do assume that $h_L(\cdot) \geq 1$.  

Then by Conjecture \ref{Vojta:Conj:lc:pairs:intro} applied to $\left(X,\frac{1}{(e+\epsilon)^m}(f^{(m)})^*D\right)$ and $\epsilon' = 1$, there exists a Zariski closed subset $Z \subsetneq X$ which is such that
\begin{multline}\label{dynamical:vojta:lct:inequalities:cor:eqn1}
\sum_{v \in S} \lambda_{D}(f^{(n)}(x), v) \leq \sum_{v \in S} \lambda_{(f^{(m)})^*D}(f^{(n-m)}(x), v) + \mathrm{O}(1) \\
\leq \epsilon_0 \alpha_f(x)^m(h_L(f^{(n-m)}(x)) - h_{\K_X}(f^{(n-m)}(x))) + \mathrm{O}_m(1) 
\end{multline}
for all $n \geq m$ with $f^{(n-m)}(x) \not \in Z$.   

Now recall that by assumption, the forward orbit $\mathrm{O}_f(x)$ is generic.   Thus 
$$f^{(n-m)}(x) \not \in Z$$ 
for all $n \geq n_0$ for some sufficiently large $n_0$.   Therefore enlarging $Z$ if required, applying Corollary \ref{useful:prop:cor}, there exist constants $C_1, C_2 > 0$ and $\ell \geq 0$ such that 
$$
\frac{h_L(f^{(n-m)}(x))}{h_L(f^{(n)}(x))} \leq \frac{C_2(n-m)^\ell}{C_1 n^\ell \alpha_f(x)^m} 
$$
for all $n \geq n_0$ and $f^{(n-m)}(x)$ with $f^{(n)}(x) \not \in \operatorname{Supp}(N)$.

Also, working away from $\operatorname{Supp}(N)$ if needed, take a constant $C_3 > 0$ which is such that 
$$h_L(\cdot) - h_{\K_X}(\cdot) \leq C_3 h_L(\cdot) \text{.}$$

Then for all $n \geq n_0$, it follows upon dividing \eqref{dynamical:vojta:lct:inequalities:cor:eqn1} by $h_L(f^{(n)}(x))$ that
\begin{multline*}
\frac{\sum_{v \in S} \lambda_{D}(f^{(n)}(x),v)}{h_L(f^{(n)}(x))} \leq \epsilon_0 \alpha_f(x)^m \frac{C_3 h_L(f^{(n-m)}(x))}{h_L(f^{(n)}(x))} + \frac{\mathrm{O}_m(1)}{h_L(f^{(n)}(x))} \\
\leq \epsilon_0 \alpha_f(x)^m \frac{C_3 C_2(n-m)^\ell \alpha_f(x)^{n-m}}{C_1 n^\ell \alpha_f(x)^n} + \frac{\mathrm{O}_m(1)}{h_L(f^{(n)}(x))} \\
\leq \frac{C_2 C_3}{C_1} \epsilon_0 + \frac{\mathrm{O}_m(1)}{h_L(f^{(n)}(x))} \text{.}
\end{multline*}

Finally, we note that
\begin{itemize}
\item{$C_1$, $C_2$ and $C_3$ are independent of $\epsilon_0$ and $n$;}
\item{
$\mathrm{O}_m(1)$ is independent of $n$; and
}
\item{$h_L(f^{(n)}(x)) \to \infty$ as $n \to \infty$.
}
\end{itemize}

The desired conclusion then follows.
\end{proof}

In Example \ref{Matsuzawa:mult:eg} below, we explain how \cite[Theorem 1.16]{Matsuzawa:2023} fits within the the context of Theorem \ref{coord:sizes:main:thm} and the more general framework that we are considering here.

\begin{example}[Compare with \cite{Matsuzawa:2023}]\label{Matsuzawa:mult:eg}  
Consider the case of a surjective morphism 
$f \colon X \rightarrow X$
for $X$ a projective variety over $\KK$.  Recall, compare with \cite[Definition 4.1]{Matsuzawa:2023}, that the \emph{multiplicity} of $f$ along a scheme point $x \in X$ can defined to be
$$
e_f(x) := \ell_{\Osh_{X,x}}(\Osh_{X,x} / f^* \mathfrak{m}_{f(x)} \Osh_{X,x}) \text{.}
$$
Further, following \cite[Theorem 4.8]{Matsuzawa:2023}, we set
$$
e_{f,-}(x) := e_-(x) := \lim_{n \to \infty} \left( \sup \left\{ e_{f^{(n)}}(y) : y \in X, f^{(n)}(y) = x \right\}\right)^{1/n} \text{.}
$$

Let $Y \subseteq X$ be a proper closed subscheme.  Set
$$
e := \max \{e_{f,-}(x) : x \in Y \} \text{.}
$$
When $X$ is nonsingular, it follows from \cite[Corollary 7.13]{Matsuzawa:2023} that  for all $\epsilon > 0$ there is a $m_0 \geq 1$ which is such that
$$
\operatorname{lct}(X,(f^{(m)})^{-1}(Y)) \geq \frac{1}{(e+\epsilon)^m}
$$
for all $m \geq m_0$.  

Here $\operatorname{lct}(X,(f^{(m)})^{-1}(Y))$ is the \emph{log canonical threshold} of $(X,(f^{(m)})^{-1}(Y))$ (as defined in \cite{Matsuzawa:2023} and using the framework of multiplier ideal sheaves; compare also with our approach in Section \ref{lct:multiplier:ideal:sheaves}) for $(f^{(m)})^{-1}(Y)$ the scheme theoretic inverse image of $Y$ under the $m$th iterate of $f$.   Thus, in this setting the hypothesis of Theorem \ref{coord:sizes:main:thm} reduces to the hypothesis of \cite[Theorem 1.16]{Matsuzawa:2023}. 
\end{example}

\begin{example}\label{extended:Matsuzawa:mult:eg}
In a spirit that is similar to Example \ref{Matsuzawa:mult:eg}, consider the case of a surjective morphism $f \colon X \rightarrow X$ 
for $X$ a projective variety over $\KK$ and having canonical singularities.  Let $D$ be a nonzero Cartier divisor on $X$ and defined over a finite extension field $\FF / \KK$.  Then upon setting 
$$e := e_f(D) = \left( \liminf_{n \to \infty} \left( \inf_E \frac{1}{\operatorname{ord}_E((f^{(n)})^* D)} \right)^{1/n}  \right)^{-1} \text{,} $$ 
the conclusion is, as noted in Remark \ref{dynamic:lc:rmk}, that  for all $\epsilon > 0$,
there exists $m_0 \geq 1$ which is such that 
$$
\operatorname{lct}((f^{(m)})^*D) \geq \frac{1}{(e+\epsilon)^m} \text{ for all $m \geq m_0$.}
$$
\end{example}

\section{Proof of Theorems \ref{coord:sizes:main:thm:cor} and \ref{coord:sizes:ZD:main:thm:cor}}\label{main:thms:proof}

Here, we prove Theorems \ref{coord:sizes:main:thm:cor} and \ref{coord:sizes:ZD:main:thm:cor}.  To keep matters in perspective, as mentioned in Section \ref{intro}, Theorem \ref{coord:sizes:main:thm} implies Theorem \ref{coord:sizes:main:thm:cor}.  Among other features, Theorem \ref{coord:sizes:main:thm:cor} below, extends \cite[Theorem 1.16]{Matsuzawa:2023} to the case of big line bundles on varieties with canonical singularities and nonzero effective divisors with coefficients in some finite extension of the base number field.

We also note that when $L = -\K_X$ is big, the conclusions of Theorem \ref{coord:sizes:main:thm:cor} and \ref{coord:sizes:ZD:main:thm:cor} hold true unconditionally provided that $\left(X, \frac{1}{(e + \epsilon)^{m_0}} (f^{(m_0)})^* D \right)$ is a properly intersecting Ding $\K$-destabilizing pair in the sense of Example \ref{Ding:destab:eg}.

\begin{proof}[Proof of Theorem \ref{coord:sizes:main:thm:cor}]
As in Example \ref{extended:Matsuzawa:mult:eg}, we note that the hypothesis item (1) of Theorem \ref{coord:sizes:main:thm} is satisfied.  The hyposthesis of item (2) is also evidently satisfied.  Thus, Theorem \ref{coord:sizes:main:thm:cor} follows as a consequence of Theorem \ref{coord:sizes:main:thm}.
\end{proof}

\begin{proof}[Proof of Theorem \ref{coord:sizes:ZD:main:thm:cor}]
As in Example \ref{extended:Matsuzawa:mult:eg}, we note that the hypothesis item (1) of Theorem \ref{coord:sizes:ZD:main:thm:cor} is  satisfied.  The proof then proceeds along the lines of \cite[Proof Theorem 1.18]{Matsuzawa:2023}.

Indeed, without loss of generality, we may and do assume that $h_L(\cdot) \geq 1$.  Then, by assumption $e < \alpha_f(x)$ and so we can choose $\epsilon \in \QQ_{>0}$ which is such that 
$$
e + \epsilon < \alpha_f(x) \text{.}
$$
Now let $\epsilon_0>0$ and fix a suitable $m_0 \geq 1$ which is such that 
$$
\frac{ (e + \epsilon)^{m_0} }{ \alpha_f(x)^{m_0} } < \epsilon_0
$$
and which is such that the pair $\left(X, \frac{1}{(e + \epsilon)^{m_0}} (f^{(m_0)})^* D \right)$ is log canonical.  The existence of such an $m_0$ follows as in Example \ref{extended:Matsuzawa:mult:eg}.  (See also Remark \ref{dynamic:lc:rmk}.)

By the conclusion of Conjecture \ref{Vojta:Conj:lc:pairs:intro} applied to $\left(X, \frac{1}{(e + \epsilon)^{m_0}} (f^{(m_0)})^* D \right)$ and with respect to $\epsilon' = 1$, it follows that there is a proper Zariski closed subset $Z \subsetneq X$ which is such that 
\begin{multline}\label{my:eqn:1}
\sum_{v \in S}  \lambda_{D}(f^{(n)}(x),v) \\ \leq \epsilon_0 \alpha_f(x)^{m_0}( h_L(f^{(n-m_0)}(x)) - h_{\K_X}(f^{(n-m_0)}(x)) )  + \mathrm{O}_{m_0}(1)
\end{multline}
for all $n \geq m_0$ with $f^{(n-m_0)}(x) \not \in Z$.  In what follows, we may and do assume that $f^{(n)}(x) \not \in \operatorname{Supp}(D)$.  Therefore we conclude from \eqref{my:eqn:1} that 
\begin{multline}\label{my:eqn:2}
\sum_{v \not \in S}  \lambda_{D}(f^{(n)}(x),v) \\ 
\geq \sum_{v \in M_{\KK}} \lambda_{D}(f^{(n)}(x),v) - \epsilon_0 \alpha_f(x)^{m_0}(h_L(f^{(n-m_0)}(x)) 
 - h_{\K_X}(f^{(n-m_0)}(x)) - \mathrm{O}_{m_0}(1)
\end{multline}
if 
$$f^{(n)}(x) \not \in Z' := f^{(m_0)}(Z) \bigcup \operatorname{Supp}(D) \text{.}$$

Enlarging $Z'$ if needed and fixing constants $C_1, C_2 > 0$ and $\ell \geq 0$ which are such that 
\begin{equation}\label{my:eqn:3}
C_1 n^\ell \alpha_f(x)^n \leq h_L(f^{(n)}(x)) \leq C_2 n^\ell \alpha_f(x)^n
\end{equation}
for all $n \geq 1$ and fixing a constant $C_3 > 0$ which is such that
$$
h_L(\cdot)  - h_{\K_X}(\cdot) \leq C_3 h_L(\cdot)
$$
the conclusion, upon combining Equations \eqref{my:eqn:2} and \eqref{my:eqn:3}, is that
$$
\frac{\sum_{v \not \in S}  \lambda_{D}(f^{(n)}(x), v) }{ h_L(f^{(n)}(x)) } \geq \frac{ \sum_{v \in M_{\KK} } { \lambda_{D}(f^{(n)}(x),v)} }{ h_L(f^{(n)}(x)) } - \epsilon_0 \frac{C_2 C_3}{C_1} - \frac{\mathrm{O}_{ m_0}(1)}{h_L(f^{(n)}(x))}
$$
if $f^{(n)}(x) \not \in Z'$.

The desired conclusion then follows since $C_1$, $C_2$, $C_3$ are independent of $\epsilon_0$, $n$; and since $\mathrm{O}_{m_0}(1)$ is independent of $n$ and $h_L(f^{(n)}(x)) \to \infty$.
\end{proof}

\providecommand{\bysame}{\leavevmode\hbox to3em{\hrulefill}\thinspace}
\providecommand{\MR}{\relax\ifhmode\unskip\space\fi MR }
\providecommand{\MRhref}[2]{%
  \href{http://www.ams.org/mathscinet-getitem?mr=#1}{#2}
}
\providecommand{\href}[2]{#2}

\end{document}